\documentclass[oneside,12pt]{amsart}
\usepackage{amssymb, amscd, mathrsfs}
\usepackage[all]{xy}
\usepackage{enumerate}

\setlength{\vfuzz}{2mm} \setlength{\textwidth}{160mm}
\setlength{\textheight}{210mm} \setlength{\oddsidemargin}{0pt}
\setlength{\evensidemargin}{0pt}

\newtheorem{thm}{Theorem}[section]
\newtheorem{cor}[thm]{Corollary}
\newtheorem{lem}[thm]{Lemma}
\newtheorem{defn}[thm]{Definition}
\newtheorem{prop}[thm]{Proposition}

\newtheorem{conj}[thm]{Conjecture}

\newtheorem{remk}[thm]{Remark}


\newcommand{\del}[2]{{}}

\newcommand{\Z}{\mathbb Z}

\newcommand{\mF}{\mathfrak {F}}
\newcommand{\mG}{\mathfrak {G}}

\newcommand{\mX}{\mathfrak {X}}
\newcommand{\cd}{{cd}_{\mF}}

\newcommand{\hix}[1]{\mbox{${\scriptstyle\bf H}^{\mathfrak F}_{#1}\mathfrak X$}}

\newcommand{\hf}[1]{\mbox{${\scriptstyle\bf H}_{#1}\mathfrak F$}}
\newcommand{\hff}[1]{\mbox{${\scriptstyle\bf H}^{\mathfrak F}_{#1}\mathfrak F$}}

\newcommand{\orb}{\mathcal{O}_{\mF}G}
\newcommand{\orbs}{\mathcal{O}_{\mF\cap S}S}
\newcommand{\orbmod}{\mbox{Mod}_{\mF}G}
\newcommand{\orbmods}{\mbox{Mod}_{\mF\cap S}S}
\newcommand{\gorbmod}{G\mbox{-}\mbox{Mod}_{\mF}}

\newcommand{\zmod}{\mathfrak{Ab}}
\newcommand{\nathom}{\mathrm{Hom}_{\mF}}

\title{Intermediaries in Bredon (Co)homology and Classifying Spaces}

\author{Fotini Dembegioti}
\address{Department of Mathematics, University of Athens, Athens, Greece}%
\email{fdebeg@math.uoa.gr}%

\author{Nansen Petrosyan}
\address{Department of Mathematics, Katholieke Universiteit Leuven, Kortrijk, Belgium}%
\email{Nansen.Petrosyan@kuleuven-kortrijk.be}%

\author{Olympia Talelli}
\address{Department of Mathematics, University of Athens, Athens, Greece}%
\email{otalelli@math.uoa.gr}%


\date{\today}

\begin{document}
\begin{abstract}
For certain contractible $G$-CW-complexes and $\mathfrak F$ a family of subgroups of $G$, we construct a spectral sequence converging to the $\mathfrak F$-Bredon cohomology of $G$ with $\mathrm{E}_1$-terms given by the $\mathfrak F$-Bredon cohomology of the stabilizer subgroups. As applications,  we obtain several corollaries concerning the cohomological and geometric dimensions of the classifying space $E_{\mF}G$. We also introduce, for any subgroup closed class of groups $\mathfrak F$, a hierarchically defined class of groups which contains all countable elementary amenable groups and countable linear groups of characteristic zero, and show that if a group $G$ is in this class, then $G$ has finite $\mathfrak F\cap G$-Bredon (co)homological dimension if and only if $G$ has jump  $\mathfrak F\cap G$-Bredon (co)homology.
\end{abstract}


\maketitle

\section{Introduction}

In \cite{Bredon}, Bredon introduced a  cohomology theory for equivariant CW-complexes. One of the motivations was to develop an obstruction theory for equivariant extension of maps. This cohomology was further studied by many authors with applications to proper group actions and classifying spaces for  families of subgroups (e.g. see L\"{u}ck \cite{Luck2}).


For a collection of subgroups $\mF$ of a group $G$ that is closed under conjugation and finite intersections, one can consider the homotopy category of $G$-$\mathrm{CW}$-complexes with stabilizers in $\mF$. A terminal
object in this category is called a model for the classifying space $E_{\mF}G$, and one can show that these models always exist under the given assumptions on $\mF$ (e.g. see \cite{Luck2}). The  augmented cellular chain complex of $E_{\mF}G$ yields a projective resolution which can be used to compute $\mF$-Bredon cohomology of $G$. In relation to Baum-Connes and Farell-Jones Isomorphism conjectures (see \cite{BCH}, \cite{MV}, \cite{DL}), there has been extensive research on questions concerning finiteness properties of $E_{\mF}G$, in particular when $\mF$ is the family of finite subgroups or the family of virtually cyclic subgroups (e.g. see \cite{KMN},\cite{luck1},\cite{LW},\cite{nuc}).

The purpose of this paper is  twofold. First, given a $G$-CW-complex $X$  such that for each  $F\in \mF$ the subcomplex $X^F$ is contractible, we construct a spectral sequence converging to the $\mF$-Bredon cohomology of $G$.  The $\mathrm{E}_1$-terms of the spectral sequence are given in terms of Bredon cohomology of the stabilizer subgroups (see 3.5). As applications, we obtain several corollaries concerning the cohomological and geometric dimensions of $E_{\mF}G$, some of which are refinements of known results (e.g. Corollaries \ref{main5}, \ref{sc:08}, \ref{main7}).

Secondly, using a construction similar to that of Kropholler's class ${{\scriptstyle\bf H}}\mathfrak{X}$ (see \cite{krop}), we introduce a new class of groups which is quite suited for applying this spectral sequence.


\begin{defn}{\label{introsc:01}}{\normalfont Let $\mF$ be a  class of groups closed under taking subgroups and let $G$ be a group. We set $\mF\cap G=\{H\leq G \; | \;  \mbox{$H$ is isomorphic to a subgroup in $\mF$}\}.$   Let $\mX$ be a class of groups. Then \hix{} is defined as the smallest class of groups containing the class
$\mX$ with the property that if a group $G$ acts cellularly on a
finite dimensional CW-complex $X$ with all isotropy
subgroups in \hix{} and such that for each subgroup $F\in \mF\cap G$ the fixed point set
$X^F$ is contractible, then $G$ is in \hix{}. ${{\scriptstyle\mathrm{L}\bf H}}^\mathfrak{F}\mX$ is defined to be the class of groups that are locally \hix{}-groups.}
\end{defn}

If $\mF$ contains the trivial group only, then \hix{} is exactly Kropholler's class  ${{\scriptstyle\bf H}}\mathfrak{X}$. Perhaps surprisingly, when $\mF$ is the class of all finite groups, then ${{\scriptstyle\bf H}}^{\mathfrak{F}}\mathfrak{F}$ also turns out to be quite large. It contains all countable elementary amenable groups and all countable linear groups over a field of characteristic zero. It is also closed under extensions, taking subgroups, amalgamated products, HNN-extensions, and countable directed unions.
In view of this, we propose the following
\begin{conj}{\rm Let $\mF$ be the class of finite or virtually cyclic groups. Then, ${{\scriptstyle\bf H}}\mathfrak{F}={{\scriptstyle\bf H}}^{\mathfrak{F}}\mathfrak{F}$}.
\end{conj}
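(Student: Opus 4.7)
The conjectured equality splits into two inclusions of very different difficulty.

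\textbf{Easy inclusion ${{\scriptstyle\bf H}}^{\mathfrak F}\mathfrak F \subseteq {{\scriptstyle\bf H}}\mathfrak F$.} I would proceed by transfinite induction on the hierarchy defining ${{\scriptstyle\bf H}}^{\mathfrak F}\mathfrak F$. The key observation is that when $\mathfrak F$ is either the class of finite groups or of virtually cyclic groups, $\mathfrak F$ contains the trivial subgroup, so the requirement ``$X^F$ contractible for every $F\in \mathfrak F\cap G$'' specialises at $F=\{1\}$ to ``$X$ itself is contractible.'' Consequently every $G$-complex witnessing membership in ${{\scriptstyle\bf H}}^{\mathfrak F}\mathfrak F$ also witnesses membership in Kropholler's ${{\scriptstyle\bf H}}\mathfrak F$, and the induction closes immediately.

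\textbf{Hard inclusion ${{\scriptstyle\bf H}}\mathfrak F \subseteq {{\scriptstyle\bf H}}^{\mathfrak F}\mathfrak F$.} Again by transfinite induction, this time on the hierarchy defining ${{\scriptstyle\bf H}}\mathfrak F$: in the inductive step I am given a group $G$ acting cellularly on a finite-dimensional contractible CW-complex $X$ whose cell stabilizers lie in ${{\scriptstyle\bf H}}^{\mathfrak F}\mathfrak F$, and I must exhibit $G$ as a member of ${{\scriptstyle\bf H}}^{\mathfrak F}\mathfrak F$. The natural candidate for the witnessing $G$-complex $Y$ is a finite-dimensional model of the classifying space $E_{\mathfrak F\cap G}G$: its isotropy subgroups lie in $\mathfrak F \subseteq {{\scriptstyle\bf H}}^{\mathfrak F}\mathfrak F$, and its $\mathfrak F$-fixed point sets are contractible by definition. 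The whole problem thus reduces to producing a finite-dimensional model of $E_{\mathfrak F\cap G}G$, built from $X$ together with finite-dimensional models of $E_{\mathfrak F\cap G_\sigma}G_\sigma$ supplied by the inductive hypothesis at the cell stabilizers $G_\sigma$, via a L\"{u}ck--Weiermann-style homotopy colimit construction. In parallel, the spectral sequence of Section~3 would propagate finite $\mathfrak F$-Bredon cohomological dimension from the stabilizers to $G$.

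\textbf{Main obstacle.} The crux is a Bredon-cohomological analog of the Eilenberg--Ganea problem. Even granting by induction that every stabilizer $G_\sigma \in {{\scriptstyle\bf H}}^{\mathfrak F}\mathfrak F$ has finite $\mathfrak F$-Bredon cohomological dimension (as the spectral sequence would guarantee), one still needs finite $\mathfrak F$-Bredon \emph{geometric} dimension for each stabilizer, together with a uniform bound on these dimensions in terms of $\dim X$, in order for the gluing to yield a finite-dimensional $Y$. Establishing such geometric finiteness and uniform dimensional control for every group in ${{\scriptstyle\bf H}}\mathfrak F$ is where I expect the principal difficulty to lie, and is presumably the reason the statement is offered as a conjecture rather than a theorem.
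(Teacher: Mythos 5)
This statement is presented in the paper as a conjecture; the paper contains no proof of it, so there is no argument to compare yours against, and your text is in fact an analysis of the two inclusions rather than a proof. Your easy inclusion ${{\scriptstyle\bf H}}^{\mathfrak F}\mathfrak F\subseteq{{\scriptstyle\bf H}}\mathfrak F$ is correct and is precisely the paper's own observation in Section 5 that ${{\scriptstyle\bf H}}^{\mathfrak F}_{\alpha}\mathfrak X\subseteq{{\scriptstyle\bf H}}_{\alpha}\mathfrak X$ for every ordinal $\alpha$: a nonempty subgroup closed class $\mathfrak F$ contains the trivial group, so the fixed-point condition at $F=\{1\}$ forces $X$ itself to be contractible.

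For the hard inclusion you correctly report that you do not have a proof, but the reduction you propose is not merely incomplete --- it targets a statement that is false. Exhibiting a finite-dimensional model for $E_{\mathfrak F\cap G}G$ would place $G$ in ${{\scriptstyle\bf H}}^{\mathfrak F}_{1}\mathfrak F$, which is far stronger than membership in ${{\scriptstyle\bf H}}^{\mathfrak F}\mathfrak F$ and fails for many groups in ${{\scriptstyle\bf H}}\mathfrak F$. For instance, with $\mathfrak F$ the class of finite groups, the free abelian group of countably infinite rank lies in ${{\scriptstyle\bf H}}_{2}\mathfrak F$ and in ${{\scriptstyle\bf H}}^{\mathfrak F}_{2}\mathfrak F$ (by the directed-union argument of Proposition \ref{sc:02}), yet, being torsion-free of infinite cohomological dimension, it admits no finite-dimensional $E_{\mathfrak F\cap G}G=EG$. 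Relatedly, your inductive hypothesis only yields $G_\sigma\in{{\scriptstyle\bf H}}^{\mathfrak F}\mathfrak F$, which does not supply finite-dimensional models $E_{\mathfrak F\cap G_\sigma}G_\sigma$ at the stabilizers. The whole point of the paper's definition is that the witnessing complex need only be an \emph{intermediary} in the sense of Definition \ref{sc:01}: its isotropy groups may lie anywhere lower in the hierarchy rather than in $\mathfrak F$, and only its $\mathfrak F$-fixed point sets need be contractible. The genuinely open problem is therefore to manufacture, from the contractible complex furnished by Kropholler's hierarchy, a finite-dimensional intermediary complex in this weaker sense --- not a classifying space. Your closing diagnosis that this is where the difficulty lies, and why the statement is offered as a conjecture, is sound once the intermediate target is weakened accordingly.
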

Next, we extend the notion of jump (co)homology to the context of Bredon  (co)homology
and prove

 \smallskip
\noindent \textbf{Theorem} (Theorem \ref{sc:05}, Remark
\ref{sc:05.6}) {\normalfont A ${{\scriptstyle\mathrm{L}\bf
H}}^\mathfrak{F}\mF$-group $G$ has jump $\mF$-Bredon cohomology if
and only if $G$ has a finite dimensional model for ${E}_{\mF}G$,
where $\mF$ is the class of finite groups or the class of
virtually cyclic groups.}

\smallskip Based on this, we propose
\begin{conj}{\rm Let $G$ be a ${{\scriptstyle\mathrm{L}\bf H}}\mathfrak{F}$-group. Then $G$ has jump $\mF$-Bredon cohomology if and only if $G$ has a finite dimensional model for ${E}_{\mF}G$, where $\mF$ is the class of finite groups or the class of
virtually cyclic groups.}
\end{conj}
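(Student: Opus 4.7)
The natural plan is to reduce this conjecture to the theorem proved in Section 5 by first establishing Conjecture 1. Since the closure operation defining ${{\scriptstyle\bf H}}^{\mathfrak F}\mathfrak F$ is strictly more demanding than Kropholler's---requiring contractibility of $X^F$ for every $F\in\mF\cap G$, not merely of $X$ itself---the inclusion ${{\scriptstyle\bf H}}^{\mathfrak F}\mathfrak F\subseteq {{\scriptstyle\bf H}}\mathfrak F$ is automatic, and equality of the two classes would immediately pass to their local versions. Conjecture 2 would then follow at once from Theorem \ref{sc:05}.

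To establish Conjecture 1, I would proceed by transfinite induction on the Kropholler hierarchy. The crux is: given $G$ acting cellularly on a finite dimensional contractible CW-complex $X$ whose isotropies already lie in ${{\scriptstyle\bf H}}^{\mathfrak F}\mathfrak F$ (by induction), produce a $G$-CW-complex $Y$ which is still finite dimensional, still has isotropies in ${{\scriptstyle\bf H}}^{\mathfrak F}\mathfrak F$, and satisfies $Y^F$ contractible for every $F\in \mF\cap G$. Simply setting $Y=X\times E_{\mF\cap G}G$ yields the right fixed-set behaviour but generally destroys finite dimensionality, so a more delicate construction is needed---perhaps in the spirit of the push-out constructions of L\"uck and Weiermann \cite{LW}, or via an iterated mapping cylinder built from finite dimensional classifying spaces of the isotropies and glued together according to the cell structure of $X$.

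An alternative route is to attack the conjecture directly, bypassing Conjecture 1. The easy direction, that a finite dimensional model for $E_{\mF}G$ yields jump $\mF$-Bredon cohomology, is immediate. For the converse one would adapt Kropholler's original argument to the Bredon setting: transfinite induction along the hierarchy, using the spectral sequence of Section 3 at each step to transfer finiteness from the stabilizers to $G$. The subtle point is that this spectral sequence, as constructed here, requires contractibility of $X^F$, whereas a Kropholler-type $G$-action only guarantees contractibility of $X$. One would therefore need either to weaken the hypothesis of the spectral sequence---say, to $X^F$ being $\mF$-acyclic in a suitable Bredon sense---or to insert an intermediate construction that replaces $X$ by a better complex at each inductive step.

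The main obstacle, on either route, is the absence of a Smith-type refinement theorem in the Bredon context. Classical Smith theory yields only mod-$p$ acyclicity of fixed sets of finite $p$-groups acting on contractible finite dimensional complexes, which is far weaker than the contractibility required, and no analogue is available for the virtually cyclic family. Passing from Kropholler's closure operation to the stronger one used in this paper thus appears to demand genuinely new structural information on ${{\scriptstyle\bf H}}\mathfrak F$-groups, and this is where I expect the real difficulty to concentrate.
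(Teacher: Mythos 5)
You should first note that the statement you were asked to prove is posed in the paper as an open \emph{conjecture}: the paper proves the analogous equivalence only for groups in ${{\scriptstyle\mathrm{L}\bf H}}^{\mathfrak{F}}\mathfrak{F}$ (Theorem \ref{sc:05} together with Remark \ref{sc:05.6}), and explicitly leaves the case of the a priori larger class ${{\scriptstyle\mathrm{L}\bf H}}\mathfrak{F}$ open. There is therefore no paper proof to compare against, and your proposal, as you yourself acknowledge, is not a proof either: both of your routes terminate in an unproved assertion. Route one reduces Conjecture 2 to Conjecture 1, which is equally open; route two reduces it to a Bredon-theoretic strengthening of Smith theory that you correctly observe does not exist. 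Your framing of the reduction is accurate (the inclusion ${{\scriptstyle\bf H}}^{\mathfrak{F}}\mathfrak{F}\subseteq{{\scriptstyle\bf H}}\mathfrak{F}$ is indeed automatic and passes to local versions, and the ``easy'' direction of the equivalence is genuinely immediate from restriction of a finite dimensional model to subgroups), but accuracy of the framing is not a proof of the hard direction.

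The concrete gap is the inductive step you isolate and then leave open: given $G$ acting cellularly on a finite dimensional \emph{contractible} complex $X$ with isotropy groups in ${{\scriptstyle\bf H}}^{\mathfrak{F}}\mathfrak{F}$, produce a finite dimensional $G$-CW-complex $Y$ with isotropy in ${{\scriptstyle\bf H}}^{\mathfrak{F}}\mathfrak{F}$ and $Y^F$ contractible for all $F\in\mathfrak{F}\cap G$. None of your three candidate constructions closes it. The product $X\times E_{\mathfrak{F}\cap G}G$ fails because $E_{\mathfrak{F}\cap G}G$ need not be finite dimensional at this stage of the induction --- finite dimensionality of $E_{\mathfrak{F}}G$ is essentially the conclusion one is trying to reach, so invoking it is circular. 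The push-out method of \cite{LW} requires control over normalizers or commensurators of the subgroups in $\mathfrak{F}$ inside $G$, which a bare Kropholler-type action does not supply. And weakening the hypothesis of the spectral sequence of Theorem \ref{main} to some notion of $\mathfrak{F}$-acyclicity of $X^F$ would at best yield a bound on $cd_{\mathfrak{F}}(G)$, but only if one could first establish that acyclicity, which is exactly where Smith theory falls short (mod-$p$ acyclicity for $p$-groups only, and nothing at all for virtually cyclic $F$). So the proposal is a reasonable research plan that correctly locates the difficulty, but every path in it passes through an unproved statement; the conjecture remains unestablished by your argument, as it does in the paper.
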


Lastly, we address the applications of our results to elementary amenable groups. In particular, we show

\smallskip
\noindent \textbf{Theorem} (Theorem \ref{sc:11})
 If an elementary amenable group $G$ acts freely and properly discontinuously on a manifold homotopy equivalent to a closed manifold, then $G$ has a finite dimensional model for $\underline{E}G$.

\section{Notations and Preliminaries}

We begin with some notions and well-established facts concerning modules over the orbit category. To do homological algebra in the category of Bredon modules and in particular to define Bredon (co)homology, we follow the approach of L\"{u}ck in \cite{luck1}, Chapter $9$.  Let us briefly recall some of the elementary notions we will need.

Let $G$ be a group and let $\mF$ be a collection of subgroups of $G$ closed under conjugation by every element of $G$ and closed under taking subgroups.  Then for every subgroup $S$ of $G$, the collection $\mF\cap S=\{H\cap S \; |\; H\in \mF\}$ is contained in $\mF$. We refer to $\mF$ as a \emph{family of subgroups} of $G$.

Let  $\orb$ denote the orbit category over $\mF$.  A  \emph{right $\orb$-module} is a contravariant functor $M: \orb \rightarrow \zmod$. The category $\orbmod$  is defined as follows: the objects are all the $\orb$-modules and the morphisms are all the natural transformations between the objects. Similarly, a \emph{left $\orb$-module} is a covariant functor $N: \orb \rightarrow \zmod$ and $\gorbmod$  is the category with objects the left $\orb$-modules and morphisms all the natural transformations between them.

For $M\in\orbmod$ and $N\in\gorbmod$, we can define their tensor product over $\mF$ by:
$$M\otimes_{\mF}N= \bigoplus_{H\in \mF}M(G/H)\otimes N(G/H)/\thicksim$$
where we identify $M(\varphi)(m)\otimes n \thicksim m \otimes N(\varphi)(n)$ for each morphism ($\varphi:G/H\to G/K)\in \orb$ and elements $m\in M(G/K)$, $n\in N(G/H)$.

A sequence $ 0\to M' \rightarrow M \rightarrow M''\to 0$
in $\orbmod$ is called {\it exact} if it is exact after evaluating in $G/H$, for all $H \in \mF$. Now, let $M \in \orbmod$ and consider the left exact functor:
\begin{align*}
 \nathom(M,-) : \orbmod &\rightarrow \zmod \\
N &\mapsto \nathom(M,N),
\end{align*}
where $\nathom(M,N)$ is the abelian group of all natural transformations from $M$ to $N$.
The $\orb$-module $P$ is  said to be {\it projective} if the functor  $\nathom(P,-)$ is exact. Just as for $\Z G$-modules, projective $\orb$-modules are flat, i.e. the functor $P\otimes_{\mF}-$ is exact.

The {\it free} $\orbmod$-module $P^G_K=\mathbb{Z}[\mbox{map}_G(-,G/K)]$ {\it based} at $G/K\in \orb$ assigns to each $G/H\in \orb$ the free $\Z$-module generated by the set of all $G$-maps from $G/H$ to $G/K$. In general, an $\orb$-module is said to be {\it free} if it is naturally isomorphic to a direct sum of free $\orb$-modules based at objects of $\orb$. An $\orb$-module $P$ is projective if and only if it a direct  summand of a free module if and only if every exact sequence $0\to M\to N\to P\to 0$ splits (see \cite[9.20]{luck1}).

It can be shown that $\orbmod$ contains enough projective  objects to construct projective resolutions. Hence, we can define  \emph{$n$-th Bredon homology} of $G$ with coefficients in $N \in \gorbmod$ to be:
\[ \mathrm{H}_n^{\mF}(G,N)= \mathrm{Tor}_n^{\mF}(\underline{\Z}, N)=\mathrm{H}_{n}(P_{\ast}\otimes_{\mF} N), \]
where $P_{\ast}\twoheadrightarrow\underline{\Z}$ is a projective $\orb$-resolution of the trivial $\orb$-module $\underline{\Z}$. Also, \emph{$n$-th Bredon cohomology} of $G$ with coefficients in $M \in \orbmod$ can be defined as:
\[ \mathrm{H}^n_{\mF}(G,M)=\mathrm{Ext}^n_{\mF}(\underline{\Z}, M)= \mathrm{H}^{n}(\nathom(P_{\ast},M)).\]
Given $M, N\in \orbmod$, we denote by $M\otimes N$ the right $\orb$-module which assigns to each $G/H\in \orb$ the $\Z$-module $M(G/H)\otimes N(G/H)$ and to each morphism $\varphi:G/H\to G/K$ of $\orb$, the homomorphism: $$M(\varphi)\otimes N(\varphi):M(G/K)\otimes N(G/K)\to M(G/H)\otimes N(G/H).$$

For each subgroup $S$ of $G$, let $\mathcal{I}:\orbs\to \orb$ be the covariant inclusion functor:
\begin{align*}
S/H &\buildrel{\mathcal{I}^{ob}}\over{\longmapsto} G/H,\\
\smallskip
(S/H {\buildrel{x}\over{\to}} S/K : H\mapsto xK)&\buildrel{\mathcal{I}^{mor}}\over{\longmapsto}(G/H {\buildrel{x}\over{\to}} H/K : H\mapsto xK).
\intertext{As for modules over group rings, associated to $\mathcal{I}$, there are induction and restriction (additive) functors:}
\mbox{Res}_{S}^{G}: \orbmod &\to \orbmods \\
M&\mapsto M\circ \mathcal{I},\\
\medskip
\mbox{Ind}_S^{G}: \orbmods &\to \orbmod \\
M&\mapsto M(?)\otimes_{\mF \cap S} \mathbb{Z}[\mbox{map}_G(-,\mathcal{I}(?))].
\end{align*}

We will need the following relations between them.

\begin{lem}[\cite{mar}, 3.5]{\label{ind}}{\normalfont Let $\mF$ be any given family of subgroups of $G$. Suppose $S$ is a subgroup of $G$, the modules $M, Q\in $ Mod$_\mF G$ and $N\in G$-Mod$_{\mF}$. Then, there are natural isomorphisms:
\begin{align*}
(Q\otimes \mbox{Ind}_{S}^{G}\underline{\Z})\otimes_{\mF}N &\cong \mbox{Res}_{S}^{G}Q\otimes_{\mF\cap S} \mbox{Res}_{S}^{G}N,\\
\mbox{Hom}_{\mF}(Q\otimes \mbox{Ind}_{S}^{G}\underline{\Z}, M)&\cong \mbox{Hom}_{\mF\cap S}(\mbox{Res}_{S}^{G}Q, \mbox{Res}_{S}^{G}M).
\end{align*}}
\end{lem}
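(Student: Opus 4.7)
The plan is to deduce both isomorphisms from three standard properties of the induction--restriction pair: the tensor adjunction $\mathrm{Ind}_S^G A \otimes_{\mF} N \cong A \otimes_{\mF \cap S} \mathrm{Res}_S^G N$, the hom adjunction $\nathom(\mathrm{Ind}_S^G A, M) \cong \mathrm{Hom}_{\mF \cap S}(A, \mathrm{Res}_S^G M)$, and a projection formula $Q \otimes \mathrm{Ind}_S^G \underline{\Z} \cong \mathrm{Ind}_S^G \mathrm{Res}_S^G Q$. The two adjunctions are formal consequences of the coend definition of $\mathrm{Ind}_S^G$, obtained by unwinding iterated tensor products or homs and applying the co-Yoneda lemma to collapse one variable.

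Granting these three ingredients, the calculations are mechanical. For the homology isomorphism, apply the projection formula and then the tensor adjunction:
\[
(Q \otimes \mathrm{Ind}_S^G \underline{\Z}) \otimes_{\mF} N \cong (\mathrm{Ind}_S^G \mathrm{Res}_S^G Q) \otimes_{\mF} N \cong \mathrm{Res}_S^G Q \otimes_{\mF \cap S} \mathrm{Res}_S^G N.
\]
For the cohomology isomorphism, apply the projection formula and then the hom adjunction:
\[
\nathom(Q \otimes \mathrm{Ind}_S^G \underline{\Z}, M) \cong \nathom(\mathrm{Ind}_S^G \mathrm{Res}_S^G Q, M) \cong \mathrm{Hom}_{\mF \cap S}(\mathrm{Res}_S^G Q, \mathrm{Res}_S^G M).
\]
Naturality in $Q$, $M$, and $N$ follows step by step from the naturality of each constituent isomorphism.

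The main obstacle is the projection formula. I would first verify it for the free $\orb$-modules $Q = P_K^G$ with $K \in \mF$ and then extend to general $Q$ by additivity and right exactness: both sides are direct-sum-preserving right exact functors of $Q$, since $\mathrm{Ind}_S^G$ is a left adjoint, $\mathrm{Res}_S^G$ is exact, and pointwise tensoring with a fixed $\orb$-module is right exact. For $Q = P_K^G$, a Mackey-style double coset decomposition of $G/K$ as an $S$-set presents
\[
\mathrm{Res}_S^G P_K^G \cong \bigoplus_{g \in S\backslash G/K} P^S_{S\cap gKg^{-1}},
\]
so applying $\mathrm{Ind}_S^G$ yields $\bigoplus_g P^G_{S\cap gKg^{-1}}$; subgroup-closedness of $\mF$ guarantees that each $S\cap gKg^{-1}$ lies in $\mF\cap S$. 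On the other side, writing $\mathrm{Ind}_S^G \underline{\Z}$ as the coend $\int^{S/L\in\orbs} P_L^G$ and using that pointwise tensor commutes with colimits, one presents $P_K^G \otimes \mathrm{Ind}_S^G \underline{\Z}$ as the same direct sum via the identities $P_K^G \otimes P_L^G \cong \bigoplus_{g\in K\backslash G/L} P^G_{K\cap gLg^{-1}}$. Matching the summands along the bijection between the two double coset decompositions yields the required natural isomorphism, and the full statement for general $Q$ then follows formally.
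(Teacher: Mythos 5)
Note first that the paper offers no proof of this lemma: it is imported verbatim from Mart\'{\i}nez-P\'{e}rez [\cite{mar}, Lemma 3.5], so the only available comparison is with that source. Your argument is correct in substance and follows what is essentially the standard (and the cited) route: both isomorphisms reduce, via the identification $\mathrm{Ind}_S^G\underline{\Z}\cong\Z[\mathrm{map}_G(-,G/S)]$ and the projection formula $Q\otimes\mathrm{Ind}_S^G\underline{\Z}\cong\mathrm{Ind}_S^G\mathrm{Res}_S^GQ$, to the tensor and Hom adjunctions for the pair $(\mathrm{Ind}_S^G,\mathrm{Res}_S^G)$, all of which are formal consequences of the coend description of induction. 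Your double-coset bookkeeping is also accurate, including the observation that subgroup- and conjugation-closedness of $\mF$ is what keeps the intersections $S\cap gKg^{-1}$ inside $\mF\cap S$.

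The one soft spot is your proof of the projection formula. Verifying it on free modules $P_K^G$ and extending by additivity and right exactness requires the free-module isomorphisms to be \emph{natural} in the free module, i.e., compatible with every morphism $P_K^G\to P_{K'}^G$; a summand-by-summand matching of two double-coset decompositions does not supply this automatically, and you do not check it. The economical repair is to write the isomorphism down once, for arbitrary $Q$, on generators: a generator of $\mathrm{Ind}_S^G\mathrm{Res}_S^GQ(G/K)$ is $q\otimes f$ with $q\in Q(G/L)$, $L\in\mF\cap S$, and $f\colon G/K\to G/L$ a $G$-map; send it to $Q(f)(q)\otimes(p_L\circ f)\in Q(G/K)\otimes\Z[\mathrm{map}_G(G/K,G/S)]$, where $p_L\colon G/L\to G/S$ is the projection. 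The inverse sends $q\otimes h$ with $h(K)=gS$ to $Q(f')(q)\otimes f''$, where $f''\colon G/K\to G/(g^{-1}Kg)$ and $f'\colon G/(g^{-1}Kg)\to G/K$ are the $G$-maps determined by $K\mapsto g\,(g^{-1}Kg)$ and $g^{-1}Kg\mapsto g^{-1}K$. One checks that these respect the coend relations and are independent of the choice of coset representative $g$; naturality in $Q$ is then visible, and the Mackey decomposition becomes unnecessary.
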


\section{Spectral sequence}

In this section, we  construct a spectral sequence associated to a $G$-CW-complex which is the Bredon analog of an $\mathrm{E}_1$-term spectral sequence
converging the (co)homology of $G$ (see \cite[p. 173-174]{brown}).

\begin{lem}{\label{proj}}
{\normalfont Let $\mF$ be any given family of subgroups of $G$.  For $F\in \mF$ and a subgroup $H$  of $G$, the module $\mbox{Res}_{F}^{G}\mbox{Ind}_{H}^{G}\underline{\Z}$ is projective in $\mbox{Mod}_{\mF\cap F}F$.}
\end{lem}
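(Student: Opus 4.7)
The plan is to first identify $\mbox{Ind}_H^G \underline{\mathbb{Z}}$ concretely as a representable-type functor, and then decompose its restriction to $F$ into $F$-orbits. As my first step, I would establish a natural isomorphism $\mbox{Ind}_H^G \underline{\mathbb{Z}} \cong \mathbb{Z}[\mbox{map}_G(-, G/H)]$ of functors on $\orb$, where the right-hand side assigns to $G/K$ the free abelian group on the $G$-maps $G/K\to G/H$. This is a direct verification from the coend presentation of the tensor product: starting from
\[
(\mbox{Ind}_H^G \underline{\mathbb{Z}})(G/K)=\bigoplus_{L\in \mF\cap H} \mathbb{Z}\otimes \mathbb{Z}[\mbox{map}_G(G/K, G/L)]\big/\sim,
\]
the relation identifies a representative $gL:G/K \to G/L$ (with $L\in \mF\cap H$) with its postcomposition $\mathcal{I}(\varphi)\circ gL$ along every morphism $\varphi:H/L \to H/L'$ in $\mathcal{O}_{\mF\cap H}H$, and the rule $gL\mapsto gH$ descends to a bijection between equivalence classes and $(G/H)^K$. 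Surjectivity uses that for $gH\in(G/H)^K$ the conjugate $L:=g^{-1}Kg$ lies in $\mF\cap H$ (since $\mF$ is closed under conjugation and subgroups), providing a representative. Injectivity uses that if $gL$ and $ghL'$ map to the same $gH$, then the intersection $M:=L \cap hL'h^{-1}$ lies in $\mF\cap H$ and serves as a common predecessor witnessing the identification through the morphisms $H/M\to H/L$ and $H/M\to H/L'$.

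With this identification in hand, I would decompose the $G$-set $G/H$ restricted to $F$ into its $F$-orbits,
\[
\mbox{Res}_F^G\, G/H \;=\; \bigsqcup_i F/K_i, \qquad K_i \;=\; F\cap g_i H g_i^{-1},
\]
where the $g_i$ run over $(F,H)$-double coset representatives of $F\backslash G/H$. Because $F\in \mF$ and $\mF$ is closed under taking subgroups, each $K_i$ belongs to $\mF\cap F$. Using the natural identification $\mbox{map}_G(G/E, G/H)=(G/H)^E=\mbox{map}_F(F/E, \mbox{Res}_F^G G/H)$ for any $E\in\mF\cap F$, the first step then yields
\[
\bigl(\mbox{Res}_F^G \mbox{Ind}_H^G \underline{\mathbb{Z}}\bigr)(F/E) \;=\; \bigoplus_i \mathbb{Z}[\mbox{map}_F(F/E, F/K_i)],
\]
naturally in $F/E$. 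This identifies $\mbox{Res}_F^G \mbox{Ind}_H^G \underline{\mathbb{Z}}$ with $\bigoplus_i P_{K_i}^F$, a direct sum of free $\mathcal{O}_{\mF\cap F}F$-modules; hence it is a free, and in particular projective, object of $\mbox{Mod}_{\mF\cap F}F$.

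I expect the main obstacle to be the first step, i.e., rigorously verifying that the coend collapses to $(G/H)^K$. The injectivity argument is the delicate point, and it is where the subgroup closure of $\mF$ is essential: the common predecessor $M=L\cap hL'h^{-1}$ must belong to $\mF\cap H$, which requires $\mF$ to be closed under intersections (a consequence of being subgroup closed). Once this identification is secured, the second step is a routine double coset decomposition and projectivity follows at once from the freeness of each $P_{K_i}^F$.
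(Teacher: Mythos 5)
Your proof is correct, and it reaches a slightly stronger conclusion than the paper does, by a genuinely different decomposition. The paper also begins by citing the identification $\mbox{Ind}_{H}^{G}\underline{\Z}\cong \Z[\mbox{map}_G(-,G/H)]$ (from Mart\'{\i}nez-P\'{e}rez, rather than re-deriving it from the coend as you do), but then it exhibits $\mbox{Res}_{F}^{G}\Z[\mbox{map}_G(-,G/H)]$ as a \emph{direct summand} of the module $\bigoplus_{x\in G}\Z[\mbox{map}_G(-,G/H\cap F^x)]$ by writing down explicit natural transformations $\tau$ and $\pi$ with $\pi\tau=1$; projectivity then follows from the summand characterization. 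You instead decompose $G/H$ as an $F$-set into orbits $\bigsqcup_i F/K_i$ with $K_i=F\cap g_iHg_i^{-1}$ indexed by double cosets, and identify the restriction with $\bigoplus_i \Z[\mbox{map}_F(-,F/K_i)]$ on the nose, so you get that the restriction is actually \emph{free} in $\mbox{Mod}_{\mF\cap F}F$, not merely projective. Your route is arguably cleaner: the indexing by double coset representatives (rather than by all of $G$) avoids the redundancy of the paper's big direct sum, and the freeness statement is the sharper one; the paper's splitting argument buys only that it sidesteps checking naturality of the orbit decomposition. The two points you must (and do) verify carefully are exactly the ones where the hypotheses on $\mF$ enter: each $K_i$ lies in $\mF\cap F$ because $\mF$ is subgroup closed and $F\in\mF$, and the common predecessor $L\cap hL'h^{-1}$ in your coend computation lies in $\mF\cap H$ because $\mF$ is closed under conjugation and subgroups. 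Both checks are sound, so no gap remains.
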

\begin{proof} By \cite[3.3]{mar}, there is an isomorphism, $\mbox{Ind}_{H}^{G}\underline{\Z}\cong \Z[\mbox{map}_G(-, G/H)]$. We claim that the module
$M=\mbox{Res}_{F}^{G}\Z[\mbox{map}_G(-, G/H)]$ is a direct summand of the free  ${\mathcal{O}_{\mF\cap F}F}$-module $N=\bigoplus_{x\in G}{\Z}[\mbox{map}_G(-, G/H\cap F^x)]$.
To prove this, we  define natural transformations $\tau : M\rightarrow N$ and $\pi : N\rightarrow M$ such that for each $S\in\mF\cap F$:
\begin{align*}
\tau(F/S)\ :\ M(F/S)& \rightarrow N(F/S)\\
(f:G/S{\buildrel{x}\over{\to}}G/H)&\mapsto (\tau(f):G/S{\buildrel{x}\over{\to}} G/H\cap F^x)\\
\pi(F/S)\ :\ N(F/S)& \rightarrow  M(F/S)\\
 (g:G/S{\buildrel{y}\over{\to}} G/H\cap F^x)&\mapsto(\pi(g):G/S{\buildrel{y}\over{\to}}G/H).
\end{align*}
where $f\in {\Z}[\mbox{map}_G(G/S, G/H)], g\in {\Z}[\mbox{map}_G(G/S, G/H\cap F^x)]$. It is easily checked that $\pi\tau=1_M$.
\end{proof}

\begin{prop}{\label{algseq}}
{\normalfont Let $\mF$ be any given family of subgroups of $G$.
Let $R_{\ast}\twoheadrightarrow \underline{\Z}$ be a resolution in
$\orbmod$ such that, for each $i$, $R_i\cong
\bigoplus_{\sigma\in
\Sigma_i}{\mbox{Ind}}_{G_{\sigma}}^{G}\underline{\Z},$ where
$\Sigma_i$ is an index set and $G_{\sigma}$ is a subgroup of $G$.
Then, for modules $N$ in $G$-Mod$_\mF$ and $K, M$ in Mod$_\mF G$,
there are first quadrant spectral sequences:
$$\mathrm{E}_{p,q}^1(N)=\displaystyle{\bigoplus_{\sigma \in \Sigma_p}}
\mathrm{Tor}_q^{\mF\cap G_{\sigma}}({\mbox{Res}}_{G_{\sigma}}^{G}K, {\mbox{Res}}_{G_{\sigma}}^{G}N)\Longrightarrow
\mathrm{Tor}_{p+q}^{\mF}(K, N),$$
$$\mathrm{E}^{p,q}_1(M)=\displaystyle{\prod_{\sigma \in \Sigma_p}}
\mathrm{Ext}^q_{\mF\cap G_{\sigma}}({\mbox{Res}}_{G_{\sigma}}^{G}K, {\mbox{Res}}_{G_{\sigma}}^{G}M)\Longrightarrow
\mathrm{Ext}^{p+q}_{\mF}(K, M).$$}
\end{prop}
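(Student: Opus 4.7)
The plan is to build a first-quadrant double complex whose two associated spectral sequences give, on one side, a collapse onto $\mathrm{Tor}^{\mF}_*(K,N)$ and, on the other, the displayed $E^1$-page. I first choose a projective resolution $P_* \twoheadrightarrow K$ in $\orbmod$, which may be taken to consist of free modules based at objects of $\orb$, and form
\[
D_{p,q} \;=\; (R_p \otimes P_q) \otimes_{\mF} N.
\]
Applying Lemma \ref{ind} termwise to each free summand of $R_p$ yields the natural identification
\[
D_{p,q} \;\cong\; \bigoplus_{\sigma \in \Sigma_p} \mathrm{Res}_{G_\sigma}^G P_q \;\otimes_{\mF \cap G_\sigma}\; \mathrm{Res}_{G_\sigma}^G N.
\]

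For the spectral sequence that collapses, I fix $q$ and take homology in the $p$-direction. Since $P_q$ is projective and hence flat, $R_* \twoheadrightarrow \underline{\Z}$ remains exact after tensoring with $P_q$, giving a resolution $R_* \otimes P_q \twoheadrightarrow P_q$. I next observe that each $R_p \otimes P_q$ is itself flat in $\orbmod$: by Lemma \ref{ind}, tensoring with any $N' \in \gorbmod$ produces $\bigoplus_\sigma \mathrm{Res}_{G_\sigma}^G P_q \otimes_{\mF \cap G_\sigma} \mathrm{Res}_{G_\sigma}^G N'$, which is exact in $N'$ because restriction is exact and $\mathrm{Res}_{G_\sigma}^G P_q$ is projective in $\mathrm{Mod}_{\mF \cap G_\sigma} G_\sigma$. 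Therefore tensoring with $N$ over $\mF$ preserves the exactness of $R_* \otimes P_q \twoheadrightarrow P_q$, and this spectral sequence collapses at $E^2$ onto $\mathrm{Tor}_*^\mF(K,N)$, identifying the total homology of $D_{*,*}$.

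For the displayed spectral sequence, I fix $p$ and take homology in the $q$-direction. Using the identification above, this amounts to computing the homology of $\bigoplus_{\sigma} \mathrm{Res}_{G_\sigma}^G P_* \otimes_{\mF \cap G_\sigma} \mathrm{Res}_{G_\sigma}^G N$. Since $\mathrm{Res}_{G_\sigma}^G$ is exact and preserves projectives, $\mathrm{Res}_{G_\sigma}^G P_*$ is a projective resolution of $\mathrm{Res}_{G_\sigma}^G K$ in $\mathrm{Mod}_{\mF \cap G_\sigma} G_\sigma$, so the $E^1$-page equals
\[
E^1_{p,q}(N) \;=\; \bigoplus_{\sigma \in \Sigma_p} \mathrm{Tor}_q^{\mF \cap G_\sigma}\bigl(\mathrm{Res}_{G_\sigma}^G K,\ \mathrm{Res}_{G_\sigma}^G N\bigr),
\]
converging to $\mathrm{Tor}_{p+q}^\mF(K,N)$. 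The cohomological version is proved in parallel using the double cochain complex $\mathrm{Hom}_{\mF}(R_p \otimes P_q, M)$; the direct sums over $\Sigma_p$ become direct products because $\mathrm{Hom}_{\mF}$ sends direct sums in the first variable to direct products.

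The main technical point is verifying that $\mathrm{Res}_{G_\sigma}^G$ carries projectives to projectives when $G_\sigma$ is only a subgroup of $G$, not necessarily a member of $\mF$, so that Lemma \ref{proj} cannot be invoked directly. This requires a mild variant of that lemma: for $H \in \mF$ and $g \in G$, the stabilizer $G_\sigma \cap gHg^{-1}$ lies in $\mF \cap G_\sigma$ because $\mF$ is closed under conjugation and subgroups, so decomposing $G/H$ into $G_\sigma$-orbits exhibits $\mathrm{Res}_{G_\sigma}^G \mathrm{Ind}_H^G \underline{\Z}$ as a direct sum of free $\mathcal{O}_{\mF \cap G_\sigma} G_\sigma$-modules based at the orbit types $G_\sigma/(G_\sigma \cap gHg^{-1})$.
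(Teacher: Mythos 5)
Your proof is correct and follows essentially the same route as the paper: the double complex built from $R_{\ast}$ together with a free resolution of $K$, with Lemma \ref{ind} identifying the $\mathrm{E}_1$-page of one filtration and the other filtration collapsing onto $\mathrm{Tor}_{\ast}^{\mF}(K,N)$ (resp.\ $\mathrm{Ext}^{\ast}_{\mF}(K,M)$). The only cosmetic differences are that you detail the homological case while the paper details the cohomological one, and that you justify the collapse via flatness of $R_p\otimes P_q$ whereas the paper uses Lemma \ref{proj} to see that $\mathrm{Res}_{F}^{G}R_{\ast}$ is a projective resolution over $\mathcal{O}_{\mF\cap F}F$ for $F\in\mF$; both rest on the same Mackey-type decomposition that you spell out in your final paragraph (and which the paper cites from Mart\'{\i}nez-P\'{e}rez for the other filtration).
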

\begin{proof}We derive the cohomological spectral sequence and note that the homological part is similar.


Let $Q_{\ast}\twoheadrightarrow K$ be a free resolution of the module $K$ in $\mbox{Mod}_{\mF}G$.  By filtering the cochain complex:
$${C}_{\ast, \ast\ast}=\mbox{Hom}_{\mF}(Q_{\ast}\otimes R_{\ast \ast}, M)$$
by rows and by columns, we respectively obtain the $\mathrm{E}_2$- and $\mathrm{E}_1$-term first quadrant spectral sequences:
\begin{align*}
^{I}\mathrm{E}^{p,q}_2(M)&=
\mathrm{H}_h^p\mathrm{H}_v^q({C}_{\ast, \ast\ast})\Longrightarrow
\mathrm{H}^{p+q}_{\mF}(\mbox{Tot}({C}_{\ast, \ast\ast})),\\
^{II}\mathrm{E}^{p,q}_1(M)&=
\mathrm{H}_h^q({C}_{\ast, \ast\ast})\Longrightarrow
\mathrm{H}^{p+q}_{\mF}(\mbox{Tot}({C}_{\ast, \ast\ast})).
\end{align*}
We claim that, for all $q>0$ and $p\geq 0$,  $^{I}\mathrm{E}^{p,q}_2(M)=0$  and $^{I}\mathrm{E}^{p,0}_2(M)=\mathrm{H}^{p}(\mathrm{Hom}_{\mF}(Q_{\ast}, M))=\mathrm{Ext}^{p}_{\mF}(K, M)$. Therefore, $\mathrm{H}^{p+q}_{\mF}(\mbox{Tot}({C}_{\ast, \ast\ast}))\cong \mathrm{Ext}^{p+q}_{\mF}(K, M)$ for all $p, q\geq 0$.

To this end, suppose $Q_{i}=\mathbb{Z}[\mbox{map}_G(-,G/F)]$ for some $G/F\in \orb$ and $i\geq 0$. Then,
\begin{align*}C_{i,\ast \ast}&=\mbox{Hom}_{\mF}(\mathbb{Z}[\mbox{map}_G(-,G/F)]\otimes R_{\ast \ast}, M)\\
&\cong\mbox{Hom}_{\mF}(\mbox{Ind}_{F}^{G}\underline{\Z}\otimes R_{\ast \ast}, M)\\
&\cong\mbox{Hom}_{\mF\cap F}(\mbox{Res}_{F}^{G}R_{\ast \ast}, \mbox{Res}_{F}^{G}M)\\
\end{align*}
By Lemma \ref{proj}, it follows that $\mbox{Res}_{F}^{G}R_{\ast \ast}$ is projective in $\mbox{Mod}_{\mF\cap F}F$. Thus, $\mbox{Res}_{F}^{G}R_{\ast \ast}\twoheadrightarrow \underline{\Z}$ is  a projective resolution in $\mbox{Mod}_{\mF\cap F}F$. Hence, $\mathrm{H}^q(C_{i,\ast \ast})=\mathrm{H}^q_{\mF\cap F}(F,\mbox{Res}_{F}^{G}M)=0$ for all $q>0$.

In general, since  $Q_i$ is a direct sum of free $\orb$-modules based at  objects of $\orb$, this argument shows that $\mathrm{H}^q_v(C_{\ast,\ast \ast})=0$ for $q>0$ and $\mathrm{H}^0_v(C_{\ast,\ast \ast})=\mathrm{Hom}_{\mF}(Q_{\ast}, M)$ which finishes the claim.

Also, by Lemma \ref{ind}, for each $p\geq 0$ we have:
\begin{align*}
\mbox{Hom}_{\mF}(Q_{\ast}\otimes R_{p}, M)&\cong \prod_{\sigma \in \Sigma_p}\mbox{Hom}_{\mF}(Q_{\ast}\otimes \mbox{Ind}_{G_{\sigma}}^{G}\underline{\Z}, M)\\
&\cong \prod_{\sigma \in \Sigma_p} \mbox{Hom}_{\mF\cap G_{\sigma}}(\mbox{Res}_{G_{\sigma}}^{G}Q_{\ast}, \mbox{Res}_{G_{\sigma}}^{G}M).
\end{align*}
Since restriction functor respects projective modules (see \cite[3.7]{mar}), this entails:
\begin{align*}^{II}\mathrm{E}^{p,q}_1(M)&=\prod_{\sigma \in \Sigma_p}\mathrm{H}^q( \mbox{Hom}_{\mF\cap G_{\sigma}}(\mbox{Res}_{G_{\sigma}}^{G}Q_{\ast}, \mbox{Res}_{G_{\sigma}}^{G}M))\\
&= \displaystyle{\prod_{\sigma \in \Sigma_p}}
\mathrm{Ext}^q_{\mF\cap G_{\sigma}}({\mbox{Res}}_{G_{\sigma}}^{G}K, {\mbox{Res}}_{G_{\sigma}}^{G}M).\end{align*}
By substituting the new terms into the $\mathrm{E}_1$-term spectral sequence, we obtain the desired spectral sequence.
\end{proof}

\begin{remk}{\label{algrem}}
{\normalfont When $K$ is the trivial module $\underline{\Z}$, we obtain the (co)homological spectral sequences:
$$\mathrm{E}_{p,q}^1(N)=\displaystyle{\bigoplus_{\sigma \in \Sigma_p}}
\mathrm{H}_q^{\mF\cap G_{\sigma}}(G_{\sigma}, {\mbox{Res}}_{G_{\sigma}}^{G}N)\Longrightarrow
\mathrm{H}_{p+q}^{\mF}(G, N),$$
$$\mathrm{E}^{p,q}_1(M)=\displaystyle{\prod_{\sigma \in \Sigma_p}}
\mathrm{H}^q_{\mF\cap G_{\sigma}}(G_{\sigma}, {\mbox{Res}}_{G_{\sigma}}^{G}M)\Longrightarrow
\mathrm{H}^{p+q}_{\mF}(G, M).$$}
\end{remk}

\begin{lem}{\label{sc:04.5}}{\normalfont Let $\mF$ be any given family of subgroups of $G$.  Suppose $X$ is a $G$-CW-complex and
denote by ${C}^{\mF}_{\ast\ast}(X, \underline{\Z})$  the Bredon chain complex of $X$. Then, for each $i$, we have:
$${C}^{\mF}_i(X, \underline{\Z})\cong \bigoplus_{\sigma\in \Sigma_i}{\mbox{Ind}}_{G_{\sigma}}^{G}\underline{\Z}.$$
where $\Sigma_i$ is a set of representatives of all the $G$-orbits of $i$-cells of $X$ and $G_{\sigma}$ is the isotropy of $\sigma$.}
\end{lem}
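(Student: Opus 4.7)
The plan is to unpack the definitions and exhibit the two sides as isomorphic functors on $\orb$. First, I would recall that ${C}^{\mF}_i(X,\underline{\Z})$ is, by definition, the contravariant functor sending $G/H$ to the cellular chain group $C_i(X^H)$ of the $H$-fixed subcomplex; a $G$-map $\varphi : G/H \to G/K$ with $\varphi(H)=gK$ satisfies $H\subseteq gKg^{-1}$, so translation by $g$ carries $X^K$ into $X^H$ and induces the structure map $C_i(X^K) \to C_i(X^H)$.

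Second, I would use the $G$-CW-structure on $X$ to decompose the $i$-cells as a disjoint union $\bigsqcup_{\sigma \in \Sigma_i} G\cdot \sigma$ of $G$-orbits, each $G$-equivariantly identified with $G/G_{\sigma}$. The key observation is that an $i$-cell $g\sigma$ lies in $X^H$ if and only if $H\subseteq gG_{\sigma}g^{-1}$, if and only if $gG_{\sigma}$ is an $H$-fixed point of $G/G_{\sigma}$, if and only if the assignment $H\mapsto gG_{\sigma}$ defines a $G$-map $G/H\to G/G_{\sigma}$. This yields a natural bijection between the $i$-cells of $X^H$ and $\bigsqcup_{\sigma\in\Sigma_i}\mbox{map}_G(G/H, G/G_{\sigma})$. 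Linearizing, together with the identification $\mbox{Ind}_{G_{\sigma}}^{G}\underline{\Z}\cong \Z[\mbox{map}_G(-, G/G_{\sigma})]$ from \cite[3.3]{mar}, gives
$${C}^{\mF}_i(X,\underline{\Z})(G/H)=C_i(X^H)\cong \bigoplus_{\sigma\in\Sigma_i}\Z[\mbox{map}_G(G/H, G/G_{\sigma})]=\bigoplus_{\sigma\in\Sigma_i}\mbox{Ind}_{G_{\sigma}}^{G}\underline{\Z}(G/H).$$

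What remains is naturality in $G/H$: for $\varphi$ as above, an $i$-cell $h\sigma\in X^K$, encoded by the $G$-map $K\mapsto hG_{\sigma}$, is sent by $C_i(X^K)\to C_i(X^H)$ to $gh\sigma\in X^H$, which is encoded by $H\mapsto ghG_{\sigma}$; on the right-hand side, precomposition with $\varphi$ also sends $K\mapsto hG_{\sigma}$ to $H\mapsto ghG_{\sigma}$, so the two identifications are compatible. I do not expect a genuine obstacle here; the content of the lemma is a translation exercise, and the only thing worth keeping straight is the dictionary among cells of the fixed subcomplex, cosets $gG_{\sigma}$ with $g^{-1}Hg\subseteq G_{\sigma}$, and $G$-maps $G/H\to G/G_{\sigma}$.
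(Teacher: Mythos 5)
Your proof is correct and follows essentially the same route as the paper: identify $C_i(X^H)$ with the free abelian group on the $i$-cells of $X^H$, match the $H$-fixed cells in each orbit $G\cdot\sigma$ with $\mbox{map}_G(G/H,G/G_\sigma)$, and invoke \cite[3.3]{mar} for $\Z[\mbox{map}_G(-,G/G_\sigma)]\cong\mbox{Ind}_{G_\sigma}^G\underline{\Z}$. The only difference is that you spell out the naturality check, which the paper leaves implicit.
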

\begin{proof} For each $H\in \mF$, we obtain: $${C}^{\mF}_i(X, \underline{\Z})(G/H)= {C}_i(X^H, \Z)\cong \bigoplus_{\sigma\in \Sigma_i}\Z[\mbox{map}_G(G/H, G/G_{\sigma})].$$
On the other hand, $\Z[\mbox{map}_G(-, G/G_{\sigma})]\cong {\mbox{Ind}}_{G_{\sigma}}^{G}\underline{\Z},$ (see \cite[3.3]{mar}). Hence, the result is proved.
%
\end{proof}

\begin{thm}{\label{main}}{\normalfont Let $\mF$ be any given family of subgroups of $G$. Suppose $X$ is a $G$-CW-complex such that for each  $F\in \mF$ the subcomplex $X^F$ is contractible. Then, for modules $N$ in $G$-Mod$_\mF$ and $K, M$ in Mod$_\mF G$, there are first quadrant spectral sequences:
$$\mathrm{E}_{p,q}^1(N)=\displaystyle{\bigoplus_{\sigma \in \Sigma_p}}
\mathrm{Tor}_q^{\mF\cap G_{\sigma}}({\mbox{Res}}_{G_{\sigma}}^{G}K, {\mbox{Res}}_{G_{\sigma}}^{G}N)\Longrightarrow
\mathrm{Tor}_{p+q}^{\mF}(K, N),$$
$$\mathrm{E}^{p,q}_1(M)=\displaystyle{\prod_{\sigma \in \Sigma_p}}
\mathrm{Ext}^q_{\mF\cap G_{\sigma}}({\mbox{Res}}_{G_{\sigma}}^{G}K, {\mbox{Res}}_{G_{\sigma}}^{G}M)\Longrightarrow
\mathrm{Ext}^{p+q}_{\mF}(K, M),$$
where $\Sigma_p$ denotes
a set of representatives of all the $G$-orbits of $p$-cells of $X$.}
\end{thm}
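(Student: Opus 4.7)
The plan is to combine Lemma \ref{sc:04.5} with Proposition \ref{algseq}, using the Bredon chain complex of $X$ as the resolution $R_\ast \twoheadrightarrow \underline{\Z}$ that feeds into Proposition \ref{algseq}. So the only real work is to verify that the augmented Bredon chain complex
\[
\cdots \longrightarrow C^{\mF}_1(X,\underline{\Z}) \longrightarrow C^{\mF}_0(X,\underline{\Z}) \longrightarrow \underline{\Z} \longrightarrow 0
\]
is an exact sequence in $\orbmod$; the structural form of each $R_i$ required by Proposition \ref{algseq} is given for free by Lemma \ref{sc:04.5}.

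To check exactness in $\orbmod$, I would evaluate at an arbitrary object $G/H$ with $H\in\mF$. By definition of the Bredon chain complex, $C^{\mF}_i(X,\underline{\Z})(G/H) = C_i(X^H,\Z)$, the ordinary cellular chain complex of the fixed-point subcomplex $X^H$ with integer coefficients. The augmentation $C^{\mF}_0(X,\underline{\Z})\to \underline{\Z}$ evaluated at $G/H$ becomes the usual augmentation $C_0(X^H,\Z)\to \Z$. By hypothesis $X^H$ is contractible for every $H\in\mF$ (since $\mF$ is subgroup-closed, hence $H\in\mF\cap G$), so its augmented cellular chain complex is acyclic. Therefore the augmented Bredon chain complex is exact at each object of $\orb$, i.e.\ it is exact in $\orbmod$.

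Having established that $R_\ast := C^{\mF}_\ast(X,\underline{\Z}) \twoheadrightarrow \underline{\Z}$ is a resolution whose terms have the precise form $R_i \cong \bigoplus_{\sigma\in\Sigma_i}\mathrm{Ind}_{G_\sigma}^G\underline{\Z}$ required by Proposition \ref{algseq}, I would plug $R_\ast$ into that proposition. The homological and cohomological first-quadrant spectral sequences produced there are exactly the two displayed in the statement of Theorem \ref{main}, with the index sets $\Sigma_p$ being the sets of $G$-orbit representatives of $p$-cells of $X$ and with isotropy groups $G_\sigma$.

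The only subtle point worth watching is that $R_\ast$ need not be projective (the stabilizers $G_\sigma$ may not lie in $\mF$), but Proposition \ref{algseq} does not require projectivity of the $R_i$; it only asks for a resolution whose terms are induced modules of the stated shape, and the double-complex spectral sequence argument there applies verbatim. So I do not expect any real obstacle beyond the clean identification $C^{\mF}_\ast(X,\underline{\Z})(G/H)=C_\ast(X^H,\Z)$ together with the contractibility hypothesis to secure exactness.
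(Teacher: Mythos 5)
Your proposal is correct and follows exactly the paper's own argument: take $R_\ast = C^{\mF}_\ast(X,\underline{\Z})$, observe that evaluating at each $G/H$ with $H\in\mF$ gives the augmented cellular chain complex of the contractible space $X^H$ (hence exactness in $\orbmod$), and then feed Lemma \ref{sc:04.5} into Proposition \ref{algseq}. Your added remark that the $R_i$ need not be projective, only induced of the stated shape, is also the correct reading of Proposition \ref{algseq}.
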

\begin{proof} Let $R_{\ast}={C}^{\mF}_{\ast}(X, \underline{\Z})$ be the Bredon chain complex associated to $X$. By our hypothesis it is exact and thus a resolution of $\underline{\Z}$. The proof is now a direct consequence of Lemma \ref{sc:04.5} and  Proposition \ref{algseq}.
\end{proof}

\section{Applications}

Here, we present some  corollaries of the constructed spectral sequences.
Throughout this section, $cd_{\mF}(G)$  denotes the Bredon cohomological dimension of $G$ with respect to the family of subgroups $\mF$. We  substitute $cd_{\mF}(G)$ by either $\underline{cd}(G)$ or $\underline{\underline{cd}}(G)$ when $\mF$ is the family of  finite or  virtually cyclic subgroups, respectively.

As a first application of Theorem \ref{main}, we improve the bound obtained in \cite[3.4]{misl}.

\begin{cor}{\label{main5}}{\normalfont  Let $\mF$ be a family of subgroups of $G$. Suppose $X$ is a $G$-CW-complex such that for each  $F\in \mF$ the subcomplex $X^F$ is contractible. Suppose also there is an integer $b$ such that ${cd}_{\mF\cap G_v}(G_v)\leq b$ where $v$ runs over all the vertices of $X$.
Then, $${cd}_{\mF}(G)\leq b+\mbox{dim}(X).$$}
\end{cor}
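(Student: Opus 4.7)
The plan is to apply Theorem \ref{main} with $K=\underline{\Z}$ and then read off the claimed bound from a two-axis vanishing on the $\mathrm{E}_1$-page. Specifically, by Theorem \ref{main} together with Remark \ref{algrem}, for every right $\orb$-module $M$ there is a first-quadrant spectral sequence
$$\mathrm{E}_1^{p,q}(M)=\prod_{\sigma\in\Sigma_p}\mathrm{H}^q_{\mF\cap G_\sigma}\bigl(G_\sigma,\mbox{Res}_{G_\sigma}^G M\bigr)\Longrightarrow \mathrm{H}^{p+q}_{\mF}(G,M),$$
where $\Sigma_p$ indexes the $G$-orbits of $p$-cells of $X$. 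Since $X$ has no $p$-cells for $p>\mbox{dim}(X)$, the entire strip $p>\mbox{dim}(X)$ of the $\mathrm{E}_1$-page vanishes identically.

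The main step is to bound the $q$-direction by $b$. First, I would observe that every cell stabilizer sits inside some vertex stabilizer: in a $G$-CW-complex the stabilizer $G_\sigma$ of an open cell fixes that cell pointwise, hence by continuity fixes the closed cell, and in particular fixes any vertex $v$ in its closure, so $G_\sigma\leq G_v$. I would then invoke the monotonicity of Bredon cohomological dimension under subgroup inclusion: for $H\leq K\leq G$ one has $(\mF\cap K)\cap H=\mF\cap H$, and because restriction from $\mbox{Mod}_{\mF\cap K}K$ to $\mbox{Mod}_{\mF\cap H}H$ is exact and sends projectives to projectives (as used in the proof of Proposition \ref{algseq} via \cite[3.7]{mar}), restricting any length-$b$ projective resolution of $\underline{\Z}$ yields ${cd}_{\mF\cap H}(H)\leq {cd}_{\mF\cap K}(K)$. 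Applied to $G_\sigma\leq G_v$, this gives ${cd}_{\mF\cap G_\sigma}(G_\sigma)\leq b$, and since the $\mathrm{E}_1^{p,q}(M)$ is a product of groups each of which vanishes for $q>b$, the whole strip $q>b$ of the $\mathrm{E}_1$-page is zero.

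Combining the two vanishing ranges, $\mathrm{E}_1^{p,q}(M)=0$ whenever $p+q>b+\mbox{dim}(X)$, so convergence forces $\mathrm{H}^n_{\mF}(G,M)=0$ for all such $n$ and all $M$. This gives ${cd}_{\mF}(G)\leq b+\mbox{dim}(X)$. The only mildly technical point is the subgroup-monotonicity of Bredon cohomological dimension; it is exactly where the hypothesis "$v$ runs over the \emph{vertices}" of $X$ (as opposed to arbitrary cells) is absorbed, and it is a standard consequence of restriction preserving projectivity. Everything else is a routine first-quadrant spectral-sequence vanishing argument.
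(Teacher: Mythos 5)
Your proof is correct and is exactly the argument the paper intends: the corollary is stated without proof as a direct consequence of Theorem \ref{main} (with $K=\underline{\Z}$), and your two-axis vanishing on the $\mathrm{E}_1$-page is the expected reading of it. The one detail you supply that the paper leaves implicit --- that every cell stabilizer fixes its closed cell pointwise, hence lies in some vertex stabilizer, so the vertex hypothesis bounds all $\mathrm{E}_1$-columns via subgroup-monotonicity of $cd_{\mF}$ --- is handled correctly.
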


\medskip

We obtain a similar result as in \cite[4.1]{mar} which can be seen as an algebraic analogue of a result of Luck and Weiermann  (see \cite[5.1]{LW}).

\begin{cor}{\label{main6}}{\normalfont  Let $G$ be a group and $\mF\subseteq \mG$ be families of subgroups of $G$. Suppose that there exists an integer $d$ such that for every $H \in \mG$ we have $cd_{\mF\cap H}(H)\leq d$. Then,
 $${cd}_{\mF}(G)\leq cd_{\mG}(G)+d.$$
In particular, ${\underline{cd}}(G)\leq {\underline{\underline {cd}}(G)}+1.$}
\end{cor}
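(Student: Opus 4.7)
The plan is to apply the cohomological spectral sequence of Proposition~\ref{algseq} (equivalently Remark~\ref{algrem}) to a projective $\mathcal{O}_{\mG}G$-resolution of $\underline{\Z}$ of length $n = cd_{\mG}(G)$, viewed as an $\mathcal{O}_{\mF}G$-resolution through the inclusion $\mathcal{O}_{\mF}G \hookrightarrow \mathcal{O}_{\mG}G$. Let $R_{\ast} \twoheadrightarrow \underline{\Z}$ be such a resolution: each $R_i$ is a direct summand of a free $\mathcal{O}_{\mG}G$-module $\bigoplus_{\sigma \in \Sigma_i}\mathrm{Ind}_{G_{\sigma}}^{G}\underline{\Z}$ with $G_{\sigma} \in \mG$, and since restriction along the inclusion is exact (it is evaluation at the objects $G/H$ with $H \in \mF \subseteq \mG$), $R_{\ast}$ stays a resolution of $\underline{\Z}$ in $\mbox{Mod}_{\mF}G$. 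Feeding it into the double complex $\mbox{Hom}_{\mF}(Q_{\ast} \otimes R_{\ast\ast}, M)$ (with $Q_{\ast} \twoheadrightarrow \underline{\Z}$ a free $\mathcal{O}_{\mF}G$-resolution) and running the two filtrations of Proposition~\ref{algseq} then produces a spectral sequence with $E_1$-page a direct summand of
\[
\prod_{\sigma \in \Sigma_p} H^q_{\mF \cap G_{\sigma}}\bigl(G_{\sigma},\; \mathrm{Res}^{G}_{G_{\sigma}} M\bigr)
\]
converging to $H^{p+q}_{\mF}(G, M)$.

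Since every $G_{\sigma}$ lies in $\mG$, the hypothesis $cd_{\mF \cap G_{\sigma}}(G_{\sigma}) \leq d$ kills each factor, hence the $E_1$-summand, for $q > d$. Combined with $R_p = 0$ for $p > n$, the spectral sequence is concentrated in the rectangle $[0, n] \times [0, d]$, forcing $H^k_{\mF}(G, M) = 0$ for every $k > n + d$ and every coefficient module $M$. This yields $cd_{\mF}(G) \leq cd_{\mG}(G) + d$. For the ``in particular'' clause, take $\mF$ to be the family of finite and $\mG$ the family of virtually cyclic subgroups; every virtually cyclic group $H$ satisfies $\underline{cd}(H) \leq 1$ (a point models $\underline{E}H$ when $H$ is finite, and $\R$ with its natural translation action otherwise), so $d = 1$ works and gives $\underline{cd}(G) \leq \underline{\underline{cd}}(G) + 1$.

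The main technical point is a mild relaxation of Proposition~\ref{algseq}: its proof is phrased for $R_i$ literally of the form $\bigoplus_{\sigma} \mathrm{Ind}_{G_{\sigma}}^{G}\underline{\Z}$, whereas I need to allow $R_i$ to be a direct summand of such a module. However, every ingredient of the original argument — Lemma~\ref{proj}'s projectivity statement, Lemma~\ref{ind}'s tensor--$\mathrm{Hom}$ identification, and the exactness of restriction — respects direct summands. Consequently each identification in the original proof (vanishing of the vertical cohomology on a column, using $F \in \mF \cap F$ so that $cd_{\mF\cap F}(F) = 0$, and the product form of the $E_1$-page of the other filtration) becomes ``is a direct summand of'' rather than ``equals'', without affecting the vanishing bounds that drive the argument above. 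One could alternatively ``fatten'' the projective resolution into a free one of the same length by the standard adjacent-pair trick, but working with the projective resolution directly is cleaner.
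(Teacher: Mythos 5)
Your proposal is correct and follows essentially the same route as the paper: restrict a length-$cd_{\mG}(G)$ resolution from $\mbox{Mod}_{\mG}G$ to $\mbox{Mod}_{\mF}G$, note its terms are (summands of) sums of modules $\mbox{Ind}_{G_\sigma}^G\underline{\Z}$ with $G_\sigma\in\mG$, and read off the vanishing rectangle from the spectral sequence of Remark~\ref{algrem}. The only difference is cosmetic: the paper restricts a free resolution outright (implicitly using the fattening trick you mention), while you work with a projective one and track direct summands.
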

\begin{proof}Let $R_{\ast}\twoheadrightarrow \underline{\Z}$ be the restriction to $\orbmod$ of a free resolution in $\mbox{Mod}_{\mG}G$ of length $cd_{\mG}(G)$. Note that $R_{\ast}\cong \bigoplus_{\sigma\in \Sigma_i}{\mbox{Ind}}_{G_{\sigma}}^{G}\underline{\Z},$
where $\Sigma_{\ast}$ is an index set and each $G_{\sigma}\in \mG$. Now, an easy application of the cohomological spectral sequence of \ref{algrem} for the family ${\mF}$ proves the first inequality.

From this and the fact that any virtually cyclic group has a 1-dimensional classifying space for proper actions we can deduce the second inequality.
\end{proof}

\begin{cor}{\label{sc:08}}{\normalfont Let $G$ be a countable directed union of groups $G_i$.  Let $\mF$ be a family of subgroups of $G$ such that
each $H\in \mF$ is a subgroup  of some $G_j$, Then,
$$cd_{\mF}(G)\leq 1+ \mbox{max}\{cd_{\mF\cap G_i}(G_i)\}.$$}
\end{cor}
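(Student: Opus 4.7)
The plan is to produce a $1$-dimensional $G$-CW-complex $X$ whose cell stabilisers are conjugates of the $G_i$'s and whose fixed-point subcomplexes $X^F$ are contractible for every $F\in\mF$; the bound will then follow at once from Corollary~\ref{main5}.

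First I would reduce to an ascending chain: countability of the index set together with directedness lets one extract a cofinal subsequence $G_1 \le G_2 \le \cdots$ with $\bigcup_n G_n = G$, and the hypothesis on $\mF$ is preserved under this replacement. The complex $X$ is then a mapping telescope: its $0$-skeleton is $\bigsqcup_{n\ge 1}G/G_n$, and for each $n\ge 1$ there is one $G$-orbit of $1$-cells, each joining a vertex $gG_n$ to $gG_{n+1}$ via the canonical projection (well-defined because $G_n\le G_{n+1}$). The stabiliser of $gG_n$, and of the unique $1$-cell leaving it upward, is $gG_ng^{-1}$; so $\dim X=1$ and every cell stabiliser is conjugate to some $G_i$.

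The crux is contractibility of $X^F$ for $F\in\mF$. Unpacking the definitions, a vertex $gG_n$ lies in $X^F$ iff $g^{-1}Fg\le G_n$, and a $1$-cell belongs to $X^F$ iff its lower endpoint does; hence the upward ray from any fixed vertex remains in $X^F$, and $X^F$ is nonempty since $F\le G_j$ for some $j$. For connectedness, two fixed vertices $gG_n$ and $g'G_{n'}$ are joined by their upward rays at any level $N$ for which $g^{-1}g'\in G_N$, and such an $N$ exists because $G=\bigcup_n G_n$. To rule out embedded cycles I would exploit the fact that at every vertex of $X$ there is a \emph{unique} upward edge: a simple closed walk must possess a local minimum, at which both incident edges would have to be upward and would therefore coincide, contradicting simplicity. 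Hence $X$ itself is a tree, so the connected nonempty subcomplex $X^F$ is a subtree, and in particular contractible.

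The main subtlety I anticipate is this contractibility verification -- in particular the cycle-free argument and the role of the directed-union hypothesis in bridging arbitrary fixed vertices. Once $X$ is in hand, Corollary~\ref{main5} yields
\[
{cd}_{\mF}(G) \le \dim X + \max_v {cd}_{\mF\cap G_v}(G_v) = 1 + \max_i {cd}_{\mF\cap G_i}(G_i),
\]
using the conjugation-invariance of Bredon cohomological dimension to pass from vertex stabilisers $gG_ng^{-1}$ back to the $G_i$'s.
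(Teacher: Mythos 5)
Your proof is correct and follows essentially the same route as the paper: the paper invokes Bass--Serre theory to obtain a $G$-tree $X$ with $X^{G_i}\neq\emptyset$ (verifying contractibility of the fixed sets $X^F$ exactly as in Proposition~\ref{sc:02}) and then applies Corollary~\ref{main5}, whereas you build that tree explicitly as a telescope of the cosets $G/G_n$ and check it is a tree by hand. The extra detail you supply (the local-minimum argument and the explicit connectedness of $X^F$) is a more self-contained rendering of the same idea, not a different method.
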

\begin{proof} Since $G$ is countable, it follows by Bass-Serre theory \cite{ser} that there exists a $G$-tree $X$ such that $X^{G_i}\ne \emptyset$ for every $G_i$. Then, by Corollary \ref{main5}, it follows that $cd_{\mF}(G)\leq \mbox{dim}(X)+ \mbox{max}\{cd_{\mF\cap G_i}(G_i)\}$ as claimed.
\end{proof}
\begin{remk}{\rm Note that the above conditions on the family of subgroups are satisfied when $\mF$ is the class of finite subgroups, virtually cyclic subgroups, or more generally consisting of groups that are subgroups of finitely generated subgroups of $G$. See also a related result of Nucinkis in \cite[4.2]{nuc}.}\end{remk}

The following corollary is related to \cite[1.4]{misl} and gives a bound for $\underline{cd}(G)$.

\begin{cor}{\label{main7}}{\normalfont  Let $1\to H\to G{\buildrel\pi\over\longrightarrow} Q\to 1$ be a short exact sequence of groups and suppose that there is an upper bound $d$ on ${\underline{cd}}(\pi^{-1}(F))$ where $F$ ranges over all finite subgroups of $Q$. Then
$$\underline{cd}(G)\leq d+\underline{cd}(Q).$$}
\end{cor}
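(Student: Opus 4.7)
The plan is to apply Corollary \ref{main6} with a judiciously enlarged family of subgroups of $G$. Assuming both $\underline{cd}(Q)$ and $d$ are finite (otherwise the bound is trivial), let $\mF$ denote the family of finite subgroups of $G$ and set
$$\mG = \{K \leq G : \pi(K) \text{ is finite in } Q\}.$$
This is closed under conjugation and subgroups, and visibly $\mF \subseteq \mG$. It suffices to verify two claims: (a) $\underline{cd}(K) \leq d$ for every $K \in \mG$, and (b) $cd_\mG(G) \leq \underline{cd}(Q)$. Granting these, Corollary \ref{main6} delivers
$$\underline{cd}(G) = cd_\mF(G) \leq cd_\mG(G) + d \leq d + \underline{cd}(Q).$$

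For (a), note that $K \leq \pi^{-1}(\pi(K))$ with $\pi(K)$ finite, so the standing hypothesis gives $\underline{cd}(\pi^{-1}(\pi(K))) \leq d$; monotonicity of $\underline{cd}$ under taking subgroups---a consequence of the fact that restriction sends projective Bredon modules to projective modules, \cite[3.7]{mar}---then yields $\underline{cd}(K) \leq d$. Step (b) is the main technical point. I would construct a pullback functor $\tilde{\pi} \colon \mathcal{O}_\mG G \to \mathcal{O}_{\mathrm{Fin}} Q$ by $G/K \mapsto Q/\pi(K)$ on objects, and sending the $G$-map $gK_2 \in (G/K_2)^{K_1}$ to the $Q$-map $\pi(g)\pi(K_2) \in (Q/\pi(K_2))^{\pi(K_1)}$, well-definedness following from $g^{-1}K_1 g \leq K_2 \Rightarrow \pi(g)^{-1}\pi(K_1)\pi(g) \leq \pi(K_2)$. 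The induced functor $\tilde{\pi}^*$ on module categories is exact (being defined objectwise) and carries $\underline{\Z}_Q$ to $\underline{\Z}_G$. Crucially, it preserves freeness: for a finite subgroup $F \leq Q$, the bijection $g\pi^{-1}(F) \leftrightarrow \pi(g) F$ yields an isomorphism
$$\tilde{\pi}^*\bigl(\mathbb{Z}[\mbox{map}_Q(-, Q/F)]\bigr) \cong \mathbb{Z}[\mbox{map}_G(-, G/\pi^{-1}(F))],$$
which is a free $\mathcal{O}_\mG G$-module based at $G/\pi^{-1}(F) \in \mathcal{O}_\mG G$ (as $\pi^{-1}(F) \in \mG$). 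Therefore, pulling back a projective resolution of $\underline{\Z}$ in $\mbox{Mod}_{\mathrm{Fin}} Q$ of length $\underline{cd}(Q)$ produces a projective resolution of $\underline{\Z}$ in $\mbox{Mod}_\mG G$ of the same length, establishing (b).

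The principal obstacle is (b): carefully setting up the pullback on orbit categories and verifying that it carries free modules to free modules (rather than merely preserving projectivity up to the geometric dimension $\underline{gd}(Q)$, which one would be stuck with if one worked geometrically by pulling back a model of $\underline{E}Q$ along $\pi$). Once (b) is in place, the conclusion is a direct application of Corollary \ref{main6}.
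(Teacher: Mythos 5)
Your proof is correct, and the decisive computation is the same one the paper relies on: the identification of the inflated/pulled-back free module $\Z[\mathrm{map}_Q(-,Q/F)]$ with the free module $\Z[\mathrm{map}_G(-,G/\pi^{-1}(F))]$, via the bijection $x\pi^{-1}(F)\leftrightarrow\pi(x)F$. Where you differ is in the packaging. The paper inflates a free resolution of $\underline{\Z}$ over the orbit category of finite subgroups of $Q$ directly into $\underline{\mbox{Mod}}\,G$, where the resulting terms $\bigoplus\mbox{Ind}_{\pi^{-1}(F)}^{G}\underline{\Z}$ are \emph{not} projective; it then feeds this resolution into the spectral sequence of Remark \ref{algrem} (Proposition \ref{algseq}), which only asks that the terms be sums of induced modules. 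You instead interpose the larger family $\mG=\{K\leq G: \pi(K)\ \mbox{finite}\}$, observe that the pullback functor lands on genuinely \emph{free} $\mathcal{O}_{\mG}G$-modules based at $G/\pi^{-1}(F)\in\mathcal{O}_{\mG}G$, deduce the clean intermediate statement $cd_{\mG}(G)\leq\underline{cd}(Q)$, and then invoke Corollary \ref{main6} together with subgroup-monotonicity of $\underline{cd}$ for step (a). Since Corollary \ref{main6} is itself proved by the same spectral sequence, the two arguments are ultimately the same computation routed through different intermediaries; your version has the merit of isolating $cd_{\mG}(G)\leq\underline{cd}(Q)$ as a reusable fact (in the spirit of \cite[4.1]{mar} and L\"uck--Weiermann) and of making explicit why one gets $\underline{cd}(Q)$ rather than the weaker geometric bound $\underline{gd}(Q)$, while the paper's version avoids introducing the auxiliary family at the cost of working with non-projective resolutions.
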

\begin{proof} Let $R_{\ast}\twoheadrightarrow \underline{\Z}$ be a free resolution in $\underline{\mbox{Mod}}Q$ of length $\underline{cd}(Q)$. Using the functor $\mathcal{F}:\underline{\mathcal{O}}G \to \underline{\mathcal{O}}Q$ induced by the projection $\pi:G\to Q$ we can inflate this resolution to a resolution $\bar R_{\ast}\twoheadrightarrow \underline{\Z}$ in $\underline{\mbox{Mod}}G$. We claim that each module $\bar R_{\ast}$ is a direct sum of induced modules ${\mbox{Ind}}_{G_{\sigma}}^{G}\underline{\Z}$ where each $G_{\sigma}= \pi^{-1}(F)$ for some finite subgroup $F$ of $Q$.

To see this, it is enough to show that, for each finite subgroup $F$ of $Q$, the free $\underline{\mathcal{O}}Q$-module $\Z[-,Q/F]$ is inflated to the $\underline{\mathcal{O}}G$-module $\Z[-,G/{\pi^{-1}(F)}]$, i.e. we need to construct a natural isomorphism between the functors $M=\Z[\mathcal{F}(-),Q/F]$ and  $N=\Z[-,G/{\pi^{-1}(F)}]$.
Let us define the natural transformations $\tau : M\rightarrow N$ and $\theta : N\rightarrow M$ such that for each finite subgroup $S$  of $G$:
\begin{align*}
\tau(G/S)\ :\ M(G/S)& \rightarrow N(G/S)\\
(Q/{\pi(S)}{\buildrel{x}\over{\to}}Q/F)&\mapsto (G/S\xrightarrow{\scriptscriptstyle \pi^{-1}(x)} G/{\pi^{-1}(F)}),\\
\theta(G/S)\ :\ N(G/S)& \rightarrow  M(G/S)\\
 (G/S{\buildrel{y}\over{\to}} G/{\pi^{-1}(F)})&\mapsto(Q/{\pi(S)}\xrightarrow{\scriptscriptstyle \pi(y)}Q/F).
\end{align*}
It readily follows that $\theta\tau=1_M$ and $\tau\theta=1_N$. This finishes the claim.

Now, by the spectral sequence of \ref{algrem}, we can conclude: $$\underline{cd}(G)\leq \mbox{sup}\{\underline{cd}(G_\sigma)\;|\; \sigma\in \Sigma_{\ast}\}+ \underline{cd}(Q)$$ and hence the result follows.
\end{proof}

Let $\underline{gd}(G)$ denote the minimal dimension of all classifying spaces $\underline{E}G$ of $G$. As a corollary, we deduce an algebraic analog of the following theorem of L\"uck (see \cite[3.1]{luck}).

\begin{cor}{\label{ext}}{\normalfont  Suppose $1\to H\to G{\buildrel\pi\over\longrightarrow} Q\to 1$ is a short exact sequence of groups and suppose further there is an upper bound $k$ on the order of the finite subgroups of $Q$. Then,
$$\underline{cd}(G)\leq k(\underline{gd}(H))+ \underline{cd}(Q).$$}
\end{cor}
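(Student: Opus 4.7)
\noindent \textbf{Proof Plan for Corollary \ref{ext}.} The natural strategy is to reduce the statement to Corollary \ref{main7} and then to bound $\underline{cd}(\pi^{-1}(F))$ for each finite subgroup $F\leq Q$ by $k\cdot \underline{gd}(H)$. Indeed, Corollary \ref{main7} already gives
$$\underline{cd}(G)\leq d+\underline{cd}(Q),\qquad d=\sup\{\underline{cd}(\pi^{-1}(F))\mid F\leq Q \text{ finite}\},$$
so it is enough to prove that for every finite subgroup $F\leq Q$,
$$\underline{cd}(\pi^{-1}(F))\leq |F|\cdot \underline{gd}(H)\leq k\cdot \underline{gd}(H).$$

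\noindent To establish this inequality for a fixed finite $F\leq Q$, I would exhibit an explicit model for $\underline{E}\pi^{-1}(F)$ of dimension $|F|\cdot \underline{gd}(H)$. Let $E=\underline{E}H$ be a model of minimal dimension $\underline{gd}(H)$. Choose a set-theoretic section $s:F\to \pi^{-1}(F)$ with $s(1)=1$, so that every $g\in \pi^{-1}(F)$ satisfies $g\, s(f)=s(\pi(g)f)\,\alpha_g(f)$ for a uniquely determined $\alpha_g(f)\in H$. A routine computation yields the cocycle identity $\alpha_{g_1g_2}(f)=\alpha_{g_1}(\pi(g_2)f)\,\alpha_{g_2}(f)$. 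Using it, I define an action of $\pi^{-1}(F)$ on the $|F|$-fold product $E^F$ by
$$(g\cdot x)(f):=\alpha_g(\pi(g)^{-1}f)\cdot x(\pi(g)^{-1}f).$$
This is the action induced by the canonical embedding $\pi^{-1}(F)\hookrightarrow H\wr F$ into the wreath product, acting naturally on $E^F$. Clearly $\dim(E^F)=|F|\cdot\underline{gd}(H)$.

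\noindent Next I would verify the two properties needed for $E^F$ to be a model for $\underline{E}\pi^{-1}(F)$. For properness, note that the stabilizer of a point $x\in E^F$ inside $H$ equals $\bigcap_{f\in F} s(f)\,\mathrm{Stab}_H(x(f))\,s(f)^{-1}$, a finite intersection of finite subgroups, hence finite; the full stabilizer in $\pi^{-1}(F)$ is then a finite extension of this by a subgroup of $F$, hence finite. For the fixed point sets, let $K\leq \pi^{-1}(F)$ be finite and set $K'=K\cap H$. Let $f_1,\dots,f_r$ be orbit representatives for the action of $K$ on $F$ through $\pi$. Unwinding $g\cdot x=x$ for $g\in K$ gives the compatibility relations $x(\pi(g)f_j)=\alpha_g(f_j)x(f_j)$ together with $x(f_j)\in E^{\,s(f_j)^{-1}K's(f_j)}$; a direct check using the cocycle identity shows that the first relations define the values on all of $F$ without further constraint. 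Consequently,
$$(E^F)^K\cong \prod_{j=1}^{r}E^{\,s(f_j)^{-1}K's(f_j)},$$
which is a product of contractible spaces, hence contractible. Therefore $E^F$ is a model for $\underline{E}\pi^{-1}(F)$, yielding $\underline{cd}(\pi^{-1}(F))\leq \underline{gd}(\pi^{-1}(F))\leq |F|\cdot \underline{gd}(H)\leq k\cdot\underline{gd}(H)$. Combining with Corollary \ref{main7} gives the desired bound.

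\noindent The main technical obstacle is checking that fixed point sets of finite subgroups of $\pi^{-1}(F)$ on $E^F$ split as products of contractible $H$-fixed point sets; this requires choosing the wreath product action with the correct cocycle so that the action of $K'\leq K\cap H$ on each factor of $E^F$ is conjugate into $H$ in a compatible way. Once this identification is in place, the rest of the proof is a direct application of Corollary \ref{main7} together with the inequality $\underline{cd}\leq\underline{gd}$.
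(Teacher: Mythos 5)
Your proposal is correct and follows essentially the same route as the paper: reduce to Corollary \ref{main7} and bound $\underline{cd}(\pi^{-1}(F))$ by exhibiting the $|F|\cdot\underline{gd}(H)$-dimensional coinduced model for $\underline{E}\pi^{-1}(F)$ via Serre's construction. The paper simply cites the fixed-point and stabilizer computations from the proof of Proposition \ref{exten}, whereas you spell them out with an explicit cocycle; the content is the same.
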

\begin{proof} For each finite subgroup $F$ of $Q$, the subgroup $\pi^{-1}(F)$ is an extension of $H$ by $F$. By Serre's construction (see the proof of Proposition \ref{exten} or \cite{ser}), it has a $|F|(\underline{gd}(H))$-dimensional model for $\underline{E}\pi^{-1}(F)$. Hence, its  Bredon cohomological dimension is bounded by $k(\underline{gd}(H))$. Now, applying the previous corollary ends the proof.
\end{proof}

Next, we derive a long exact sequence which is the Bredon analogue of the long exact sequence in ordinary cohomology arising from a cellular action on a tree.

\begin{cor}{\label{les}}{\normalfont  Suppose $\mF$ is a family of subgroups of $G$ and $X$ is a $G$-tree such that for each  $F\in \mF$,  $X^F$ is nonempty and connected. Denote by $G_v$ the stabilizer of a vertex $v$ and by $G_e$ the stabilizer of an edge $e$. Let $\Sigma_0$ and $\Sigma_1$ be the sets of $G$-orbit representatives of the vertexes and the edges of $X$, respectively. Then, for every $M$  in Mod$_\mF G$, there is a long exact sequence:
{\small $$\cdots\to \mathrm{H}^{i}_{\mF}(G, M) \to\displaystyle{\bigoplus_{v \in \Sigma_0}} \mathrm{H}^i_{\mF\cap G_v}(G_{v}, {\mbox{Res}}_{G_{v}}^{G}M)\to \displaystyle{\bigoplus_{e \in \Sigma_1}} \mathrm{H}^i_{\mF\cap G_e}(G_{e}, {\mbox{Res}}_{G_{e}}^{G}M)\to \mathrm{H}^{i+1}_{\mF}(G, M) \to\cdots $$}}
\end{cor}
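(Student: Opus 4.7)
The plan is to specialize the cohomological spectral sequence of Theorem \ref{main} (as expressed in Remark \ref{algrem}) to the tree $X$ and observe that it collapses to a long exact sequence because $X$ is one-dimensional.

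First, I would verify the hypothesis of Theorem \ref{main}: for each $F\in \mF$, the fixed point set $X^F$ is a nonempty connected subcomplex of the tree $X$, hence a subtree, hence contractible. Thus Theorem \ref{main} and Remark \ref{algrem} (with $K=\underline{\Z}$) apply and yield a first quadrant spectral sequence
$$\mathrm{E}^{p,q}_1(M) = \prod_{\sigma\in \Sigma_p} \mathrm{H}^q_{\mF\cap G_\sigma}\bigl(G_\sigma, \mbox{Res}^G_{G_\sigma} M\bigr) \Longrightarrow \mathrm{H}^{p+q}_{\mF}(G, M),$$
where $\Sigma_p$ is a set of $G$-orbit representatives of the $p$-cells of $X$. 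Since $X$ is one-dimensional, $\Sigma_p=\emptyset$ for $p\geq 2$, so the $\mathrm{E}_1$ page is concentrated in the two columns $p=0$ and $p=1$.

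Next, I would invoke the standard fact that a two-column first quadrant spectral sequence yields a long exact sequence. Because the differential $d_r$ has bidegree $(r, 1-r)$, its target lies in a zero column for every $r\geq 2$; hence all higher differentials vanish and $\mathrm{E}_\infty = \mathrm{E}_2$. The resulting two-step convergence filtration on $\mathrm{H}^n_{\mF}(G,M)$ gives a short exact sequence
$$0 \to \mathrm{coker}\bigl(d_1: \mathrm{E}^{0,n-1}_1 \to \mathrm{E}^{1,n-1}_1\bigr) \to \mathrm{H}^n_{\mF}(G, M) \to \ker\bigl(d_1: \mathrm{E}^{0,n}_1 \to \mathrm{E}^{1,n}_1\bigr) \to 0,$$
and splicing these short exact sequences through the $d_1$-differentials produces the claimed long exact sequence.

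No step is a genuine obstacle once Theorem \ref{main} is available; the argument is purely homological-algebraic once the contractibility of $X^F$ is established. One cosmetic point is that Proposition \ref{algseq} naturally produces products while the stated sequence is written with direct sums, but this is harmless for the splicing procedure and vanishes, for instance, when $G$ acts on $X$ with finitely many orbits.
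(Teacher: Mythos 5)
Your argument is correct and is essentially the paper's own proof: both apply the cohomological spectral sequence of Theorem \ref{main} with $K=\underline{\Z}$ to the tree $X$ (whose fixed-point subcomplexes $X^F$ are contractible subtrees), observe that only the columns $p=0,1$ survive, and splice the resulting two-column first-quadrant spectral sequence into a Wang-type long exact sequence. You are also right to flag the product-versus-direct-sum point: the spectral sequence of Proposition \ref{algseq} produces products over $\Sigma_p$, whereas the corollary as stated uses direct sums, a discrepancy the paper's proof passes over silently.
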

\begin{proof} Let us consider the spectral sequence of \ref{main} applied to the $G$-CW-complex $X$ and $K=\underline{\Z}$.  On the first page there are only two possibly nonzero vertical lines, i.e. $\mathrm{E}^{p,q}_1(M)=0$ for $p>1$. Therefore, we have a Wang sequence (see for example \cite[p.130]{weib}):
$$\cdots\to \mathrm{H}^{i}_{\mF}(G, M) \to \mathrm{E}^{0,i}_1(M)\to \mathrm{E}^{1,i}_1(M)\to \mathrm{H}^{i+1}_{\mF}(G, M) \to\cdots$$
Substituting the $\mathrm{E}_1$-terms, yields the desired long exact sequence.
\end{proof}

Consequently, we obtain a Mayer-Vietoris sequence for amalgamated products.

\begin{cor}{\label{MV}}{\normalfont  Let $G=A\ast_C B$, where $A$, $B$ and $C$ are groups and suppose $M$ in Mod$_\mF G$. Suppose, every  $F$ in $\mF$ is a subgroup of a conjugate of $A$ or $B$. Then, there is a long exact sequence:
{\small $$\cdots\to \mathrm{H}^{i}_{\mF}(G, M) \to \mathrm{H}^i_{\mF\cap A}(A, {\mbox{Res}}_{A}^{G}M)\oplus \mathrm{H}^i_{\mF\cap B}(B, {\mbox{Res}}_{B}^{G}M)\to \mathrm{H}^i_{\mF\cap C}(C, {\mbox{Res}}_{C}^{G}M)\to \mathrm{H}^{i+1}_{\mF}(G, M)\cdots $$}}
\end{cor}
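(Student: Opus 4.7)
The plan is to apply Corollary \ref{les} to the Bass-Serre tree of the amalgamated product. Let $X$ be the standard $G$-tree associated to $G=A\ast_C B$: its vertex set is the disjoint union $G/A\sqcup G/B$ and its edge set is $G/C$, so that there are exactly two $G$-orbits of vertices, with stabilizers $A$ and $B$, and one $G$-orbit of edges, with stabilizer $C$. Thus $\Sigma_0=\{v_A,v_B\}$ and $\Sigma_1=\{e_C\}$ with $G_{v_A}=A$, $G_{v_B}=B$, $G_{e_C}=C$.

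Next, I would verify the hypothesis of Corollary \ref{les}, namely that $X^F$ is nonempty and connected for every $F\in\mF$. Nonemptiness is immediate from the standing assumption: if $F\leq gAg^{-1}$ (resp.\ $F\leq gBg^{-1}$), then $F$ fixes the vertex $gA\in G/A$ (resp.\ $gB\in G/B$). Connectedness is a standard fact about group actions on trees: the fixed point set of any subgroup of $G$ acting on a tree is either empty or a subtree, because whenever $F$ fixes two vertices it fixes the unique geodesic joining them edge-by-edge. Hence $X^F$ is a (nonempty) subtree, in particular connected.

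Applying Corollary \ref{les} with $K=\underline{\Z}$ then yields a long exact sequence
\begin{equation*}
\cdots\to \mathrm{H}^{i}_{\mF}(G,M)\to \mathrm{H}^i_{\mF\cap A}(A,\mathrm{Res}^G_A M)\oplus \mathrm{H}^i_{\mF\cap B}(B,\mathrm{Res}^G_B M)\to \mathrm{H}^i_{\mF\cap C}(C,\mathrm{Res}^G_C M)\to \cdots
\end{equation*}
after identifying the direct sum over $\Sigma_0$ with the two summands indexed by $A$ and $B$ and the sum over $\Sigma_1$ with the single $C$-term. Since the indexing sets $\Sigma_0,\Sigma_1$ are finite, no distinction between $\bigoplus$ and $\prod$ arises.

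The only steps requiring care are the tree-theoretic fixed-point claim and the checking that the coefficient modules coming out of Corollary \ref{les} genuinely identify with $\mathrm{Res}^G_A M$, $\mathrm{Res}^G_B M$, $\mathrm{Res}^G_C M$ (rather than their conjugates by coset representatives); both are essentially formal. I do not anticipate a real obstacle — the whole content is that Bass--Serre theory provides exactly the two-column $G$-CW-complex to which Corollary \ref{les} applies.
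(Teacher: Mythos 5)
Your proposal is correct and follows exactly the paper's route: the paper's proof is the one-line observation that the result follows by applying Corollary \ref{les} to the Bass--Serre tree of $G=A\ast_C B$, and you have simply spelled out the orbit identifications and the verification that $X^F$ is a nonempty subtree. No issues.
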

\begin{proof} Let $X$ be the Bass-Serre tree of $G$. The result immediately follows from Corollary \ref{les}.
\end{proof}

\begin{remk}{\rm This Mayer-Vietoris sequence has already  been derived in \cite[3.32]{MV} by a different approach, using a homotopy-push-out of the induced spaces of the classifying spaces  corresponding to the subgroups in the amalgamated product.}\end{remk}

\section{\hix{}-groups}
We begin with the definition of a class of groups which is analogous to  Kropholler's definition of the class ${{\scriptstyle\bf H}}\mathfrak{X}$ (see \cite{krop}).

Unless otherwise specified, $\mF$ will denote a subgroup closed class of groups. Note that for any group $G$ the collection: $$\mF\cap G=\{H\leq G \; | \;  \mbox{$H$ is isomorphic to a subgroup in $\mF$}\}$$ is then a family of subgroups of $G$.   If $\mF$ is the class of finite groups, when referring to Bredon (co)homology of $G$ with respect to $\mF\cap G$, we will simply say Bredon (co)homology of $G$. We will assume throughout that whenever a group $G$ acts cellularly on a CW-complex, then the action of the stabilizer group of any cell fixes that cell pointwise.


\begin{defn}{\label{sc:01}}{\normalfont Let $\mX$ be a class of groups. \hix{} is defined as the smallest class of groups containing the class
$\mX$ with the property that if a group $G$ acts cellularly on a
finite dimensional CW-complex $X$ with all isotropy
subgroups in \hix{} and such that for each subgroup $F\in \mF\cap G$ the fixed point set
$X^F$ is contractible, then $G$ is in \hix{}.}
\end{defn}

\begin{remk}{\rm When $\mF$ is the trivial class, then \hix{} is exactly the Kropholler's class  ${{\scriptstyle\bf H}}\mathfrak{X}$.
 Also, note that $X$ need not be a model for $E_{\mF}G$, because we do not require for $X^F$ to be empty if $F$ is not in $\mF$. It is, in a way, an intermediary complex.}\end{remk}
There is also an inductive definition of \hix{}-groups, by using ordinal
numbers:

\begin{enumerate}[(a)]
\item Let ${{\scriptstyle\bf H}}_{0}^\mathfrak{F}\mX=\mX$.

\smallskip

\item For ordinal $\beta >0$, define ${{\scriptstyle\bf H}}^\mathfrak{F}_{\beta}\mathfrak{X}$ to be the class of groups
that can act cellularly on a finite dimensional CW-complex $X$
such that each isotropy group is in ${{\scriptstyle\bf H}}^\mathfrak{F}_{\alpha}\mathfrak{X}$ for some $\alpha < \beta$ ($\alpha$
can depend on the isotropy group) and for each subgroup $F\in \mF\cap G$ the fixed point set
$X^F$ is contractible.
\end{enumerate}

We denote by $\scriptstyle\bf L\hix{}$ the class of locally \hix{}-groups, i.e. groups whose finitely generated subgroups belong to \hix{}.
\begin{remk}\rm {It immediately follows:
\begin{enumerate}[-]
\item a group is in \hix{} if and only if it belongs to  ${{\scriptstyle\bf H}}^\mathfrak{F}_{\alpha}\mathfrak{X}$ for some
ordinal $\alpha$;

\smallskip

\item  ${{\scriptstyle\bf H}}^\mathfrak{F}_{\alpha}\mathfrak{X}\subseteq {{\scriptstyle\bf H}}_{\alpha}\mathfrak{X}$ for each ordinal $\alpha$ where ${{\scriptstyle\bf H}}_{\alpha}\mathfrak{X}$ is the  Kropholler's class for ordinal $\alpha$;

\smallskip

\item $G$ admits a finite dimension model for $E_{\mF}G$ if and only if it is in ${{\scriptstyle\bf H}}^\mathfrak{F}_{1}\mF$.
\end{enumerate}}
\end{remk}

\begin{prop}{\label{exten}}{\normalfont   \hix{} is subgroup closed. If $\mF$  is the class of finite groups, then \hff{}\ is extension closed.}
\end{prop}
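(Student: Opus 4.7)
The plan is to handle each assertion by transfinite induction on the ordinal filtration $\hix{\alpha}$ introduced just after the definition.

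For the subgroup closedness of $\hix{}$, I will argue by induction on $\alpha$ that each class $\hix{\alpha}$ is subgroup closed (assuming $\mX$ is subgroup closed, as is natural in this setup). The case $\alpha=0$ is immediate. For $\alpha>0$, take $G\in\hix{\alpha}$ acting on a finite-dimensional CW complex $X$ with all cell isotropies in $\hix{\alpha'}$ for various $\alpha'<\alpha$ and with $X^F$ contractible for every $F\in\mF\cap G$, and let $H\leq G$. Restricting the action to $H$, each cell isotropy becomes $G_\sigma\cap H\leq G_\sigma$, which lies in $\hix{\alpha'}$ by the inductive hypothesis applied to $G_\sigma$. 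Moreover $\mF\cap H\subseteq\mF\cap G$ since any subgroup of $H$ isomorphic to a member of $\mF$ already lies in $\mF\cap G$; hence the needed fixed point sets $X^F$ remain contractible and $H\in\hix{\alpha}$.

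For the extension statement, given $1\to K\to G\xrightarrow{\pi}Q\to 1$ with $K,Q\in\hff{}$ and $\mF$ the class of finite groups, I will induct on the ordinal $\beta$ with $Q\in\hff{\beta}$. In the inductive step $\beta>0$, I would choose a $Q$-CW complex $Y$ witnessing $Q\in\hff{\beta}$ and view it as a $G$-CW complex via $\pi$. The cell isotropy $G_\sigma=\pi^{-1}(Q_\sigma)$ fits into $1\to K\to G_\sigma\to Q_\sigma\to 1$ with $Q_\sigma\in\hff{\beta'}$ for some $\beta'<\beta$, so $G_\sigma\in\hff{}$ by the inductive hypothesis. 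For any finite $F\leq G$, the image $\pi(F)$ is finite in $Q$, and since $G$ acts through $\pi$ we have $Y^F=Y^{\pi(F)}$, which is contractible. Hence $G\in\hff{}$.

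The base case $\beta=0$, where $Q$ is finite, is the delicate point, and I intend to apply Serre's construction. Let $X$ be a $K$-CW complex witnessing $K\in\hff{\alpha}$. Choosing coset representatives of $K$ in $G$ yields an embedding $G\hookrightarrow K\wr Q$ whose natural action on $X^n$, with $n=|Q|$, restricts to a $G$-action on a finite-dimensional CW complex. For a finite $F\leq G$, decomposing $\{1,\dots,n\}$ into $F$-orbits exhibits $(X^n)^F$ as a product of subspaces of the form $X^{F'}$ for various finite $F'\leq K$; each factor is contractible by hypothesis on $X$, and so is the product. The main obstacle will be identifying the cell stabilizers of the $G$-action on $X^n$ and placing them in $\hff{}$: a stabilizer of a cell $(\tau_1,\dots,\tau_n)$ fits into an extension of a subgroup of $K_{\tau_1}\times\cdots\times K_{\tau_n}$ by a finite subgroup of $Q$, with each $K_{\tau_i}$ at strictly lower ordinal level. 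To close the loop I plan an auxiliary transfinite induction establishing that $\hff{}$ is closed under finite direct products—proved by letting a product act on the factor complex by projection and using that quotients of finite groups are finite—and then combine it with the inductive hypothesis on $\alpha$ to feed the cell stabilizers back into $\hff{}$.
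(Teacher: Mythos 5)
Your overall architecture is the same as the paper's: subgroup closure is checked level by level, and extension closure is handled by a double induction --- first finite quotients via Serre's coinduced construction, inducting on the ordinal level of the kernel, then general quotients by inducting on the level of the quotient and pulling the complex back along $\pi$. The subgroup-closure argument, the inductive step for $\beta>0$, and the identification of $(X^n)^F$ with a product of sets $X^{F_s}$, $F_s=K\cap sFs^{-1}$ indexed by double cosets, are all correct and agree with the paper.

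The gap is in your treatment of the cell stabilizers in the base case $\beta=0$. You place $G_{(\tau)}\cap K$ inside $K_{\tau_1}\times\cdots\times K_{\tau_n}$ and invoke an auxiliary product lemma to put this group into \hff{}; but then $G_{(\tau)}$ is only known to be a finite extension of \emph{some} \hff{}-group, at an ordinal level you do not control, and ``finite extensions of \hff{}-groups lie in \hff{}'' is precisely the statement you are in the middle of proving. Your inductive hypothesis only covers finite extensions of \hff{\alpha'}-groups for $\alpha'<\alpha$, and a product of groups lying in \hff{\beta_i} with $\beta_i<\alpha$ need not lie in \hff{\alpha'} for any $\alpha'<\alpha$: the projection argument you sketch for products essentially adds the ordinal levels, so it can overshoot $\alpha$ (already for $\alpha=\beta+1$). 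So the loop does not close. The repair --- and what the paper actually does --- is to observe that the product detour is unnecessary: if $k\in K$ stabilizes the point (or cell) $f$ of the coinduced complex, then evaluating at the identity coset gives $f(e)=f(k)=kf(e)$, so $K\cap G_f$ is a subgroup of the stabilizer of the \emph{single} coordinate $f(e)$, which lies in \hff{\alpha'} for one fixed $\alpha'<\alpha$; subgroup closure of \hff{\alpha'} then makes the inductive hypothesis directly applicable to the finite extension $G_f$ of $K\cap G_f$. With that one change your proof goes through and the auxiliary product lemma is not needed.
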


\begin{proof}  It is obvious that \hix{\alpha}\ is subgroup closed for every ordinal $\alpha$, so $\hix{}$ is subgroup closed.

The proof that \hff{}\ is extension closed is analogous to the proof of 2.3 in \cite{krop}. First we prove that  \hff{\alpha}\ is closed under finite extensions, using induction on the ordinal $\alpha$.  This is obvious for $\alpha=0$. Assume that it is true for all $\beta<\alpha$ and let $G$ be a group with a finite index subgroup $N$ that belongs to \hff{\alpha}. There is a finite dimensional $N$-CW-complex $X$ such that each isotropy group is in \hff{\beta} for some $\beta<\alpha$ and $X^F$ is contractible for any finite subgroup $F$ of $N$. Serre's construction (cf. \cite{brown}) shows that the $G$-set $Y=\mathrm{Hom}_G(N,X)$ has the structure of a CW-complex via the bijection $\varphi: Y\rightarrow\prod_{t\in T}X,$ with $\varphi(f)=(f(t))_{t\in T}$ where $T$ is a set of coset representatives of $N$ in $G$. If $K$ is the isotropy group of an element $f$ of $Y$ then $f(g)=f(gk)$ for all $k\in K$ and $g\in G$. In particular $f(e)=f(k)=kf(e)$ for all $k\in N\cap K$, so $N\cap K$ is a subgroup of the isotropy group of $f(e)$, which belongs to $\hff{\beta}$ for some $\beta<\alpha$. Since \hff{\beta} is subgroup closed it follows from the inductive hypothesis that $K$ belongs to \hff{\beta}. Next we show that $Y^F$ is contractible for every finite subgroup $F$ of $G$. For this, take a set $S$ of double coset representatives, such that $G=\cup_{s\in S}NsF$ and let $F_s=N\cap sFs^{-1}$. The map $\psi:Y^F\rightarrow\prod_{s\in S}X^{F_s}$ with $\psi(f)=(f(s))_{s\in S}$ is well defined because if $f\in Y^F$ and $g\in F_s$ we have $s^{-1}gs\in F$, so $gf(s)=f(gs)=f(ss^{-1}gs)=(s^{-1}gsf)(s)=f(s)$, thus $f(s)\in X^{F_s}$. Obviously $\psi$ is a bijection and each $X^{F_s}$ is contractible, so $Y^F$ is contractible.

Now, we prove by induction on the ordinal $\beta$ that if $N$ is an \hff{\alpha}-subgroup of $G$ such that $G/N$ is in \hff{\beta} then $G$ is in \hff{}. Indeed, the previous argument proves this claim for $\beta=0$.  Let $\beta>0$ and assume that the claim holds for $\gamma<\beta$. Let $X$ be a  $G/N$-CW-complex with stabilizers in \hff{\gamma} for $\gamma<\beta$ and $X^F$ is contractible for any finite subgroup $F$ of $G/N$. Then $G$ acts on $X$ via the projection $\pi:G\rightarrow G/N$ with cell stabilizers that are extensions of $N$ by \hff{\gamma}-groups for $\gamma<\beta$. It follows from the inductive hypothesis that these belong to \hff{}. Now, if $F$ is a finite subgroup of $G$ then $X^F=X^{\pi(F)}$ is contractible since $\pi(F)$ is a finite subgroup of $G/N$. Thus, $G$ belongs to \hff{}.
\end{proof}

\begin{prop}{\label{sc:02}}{\normalfont   If $\mF$  is the class of finite groups, then \hff{} is closed under countable directed unions, amalgamated products and HNN-extensions.}
\end{prop}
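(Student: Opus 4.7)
The three constructions each admit a well-known $G$-tree via Bass-Serre theory, and on a tree the fixed point set of any subgroup is either empty or a subtree, hence automatically contractible in the nonempty case. Thus, to conclude $G \in \hff{}$ from Definition \ref{sc:01} applied to the $1$-dimensional CW-complex $X$, it suffices in each case to verify (i) every cell stabilizer of $X$ lies in \hff{}, and (ii) every finite subgroup $F \leq G$ fixes some vertex of $X$.

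For a directed union $G = \bigcup_n G_n$ with $G_1 \leq G_2 \leq \cdots$ all in \hff{}, I would use the tree already employed in Corollary \ref{sc:08}: vertices $\bigsqcup_n G/G_n$, with $gG_n$ joined to $gG_{n+1}$. Cell stabilizers are conjugates of the $G_n$, hence in \hff{}. Any finite subgroup $F \leq G$ is contained in some $G_N$ (it is finitely generated), so fixes the vertex $G_N$. For $G = A \ast_C B$ or the HNN-extension $G = A \ast_C$, I would let $X$ be the standard Bass-Serre tree; vertex stabilizers are conjugates of $A$ and $B$ (respectively $A$), and edge stabilizers are conjugates of $C \leq A$, all in \hff{} by subgroup-closedness from Proposition \ref{exten}. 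Condition (ii) is the classical fact that every finite subgroup of an amalgamated product or HNN-extension is conjugate into a factor (respectively into the base), and therefore fixes a vertex of $X$.

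\textbf{Expected obstacle.} There is no serious obstacle; the argument mirrors that of Corollary \ref{sc:08}, but the conclusion is membership in \hff{} rather than a cohomological dimension bound. The only genuine ingredients beyond Definition \ref{sc:01} and Proposition \ref{exten} are the standard torsion-fixed-point theorems for groups acting on the associated Bass-Serre trees, together with the observation that the fixed set of a subgroup on a tree is convex.
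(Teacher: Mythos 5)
Your proposal is correct and follows essentially the same route as the paper: in each case one takes the Bass--Serre tree, notes that cell stabilizers are conjugates of the given \hff{}-groups (or subgroups thereof, using Proposition \ref{exten}), and that every finite subgroup fixes a vertex so its fixed set is a nonempty subtree, hence contractible by convexity. The paper's proof is the same argument, only slightly less explicit about the stabilizer verification.
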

\begin{proof}
let $G=\bigcup_{n\in\mathbb N}G_n$ where $G_n\leq G_{n+1}$ and $G_n\in\hff{}$ for all $n\in\mathbb N$. Then it follows from Bass-Serre theory that $G$ acts on a tree $T$ such that $T^{G_n}\not=\emptyset$ for every $G_n$. If $F$ is a finite subgroup of $G$ then $F\leq G_n$ for some $n\in\mathbb N$, so $T^F\not=\emptyset$. Now, $T^F$
is connected, because if an element $g\in G$ stabilizes two vertices of $T$ then $g$ stabilizes the (unique) path that connects them. Thus $T^F$ is contractible.

If $G$ is an amalgamated product $A\ast_CB$ or an HNN-extension $K\ast_{\alpha}$, then it follows again from Bass-Serre theory that $G$ acts on a tree $T$ such that $T^A\not=\emptyset$, $T^B\not=\emptyset$ and $T^K\not=\emptyset$, respectively. The same is true for every conjugate of $A$, $B$ and $K$. Now,  any finite subgroup $F$ of $G$ is conjugate to a subgroup of $A$ or $B$, and $K$ respectively, so $T^F\not=\emptyset$ and repeating the above argument we have that $T^F$ is contractible.
\end{proof}

\begin{thm}{\label{sc:03}}{\normalfont If $\mF$ is the class of finite groups, then \hff{}\ contains all countable elementary amenable groups and all countable linear groups over a field of characteristic 0.}
\end{thm}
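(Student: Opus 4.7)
The plan is to treat countable elementary amenable groups and countable linear groups in characteristic zero separately, in each case leveraging the closure properties of \hff{} under subgroups, extensions, countable directed unions, amalgamated products and HNN-extensions from Propositions~\ref{exten} and \ref{sc:02}.

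For the elementary amenable case, I would induct along the standard hierarchy $\mathrm{EA}_0 \subseteq \mathrm{EA}_1 \subseteq \cdots$, where $\mathrm{EA}_0$ consists of finite and abelian groups and each $\mathrm{EA}_\alpha$ closes the union of earlier classes under extensions and directed unions. Finite groups belong to \hff{0} by definition. For a finitely generated abelian group $A = \Z^n \oplus T$ with $T$ finite, let $\Z^n$ act by translations on $\R^n$ and let $T$ act trivially: every cell stabilizer equals $T$, which is finite, and for every finite $F \leq A$ (necessarily contained in $T$) the fixed point set $(\R^n)^F = \R^n$ is contractible, whence $A \in \hff{1}$. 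A countable abelian group is a countable directed union of its finitely generated abelian subgroups, so Proposition~\ref{sc:02} places it in \hff{}. The inductive step over the elementary-amenable hierarchy is then automatic from extension closure (Proposition~\ref{exten}) and directed-union closure (Proposition~\ref{sc:02}).

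For a countable linear group $G \leq \mathrm{GL}_n(K)$ with $\mathrm{char}(K) = 0$, I would first reduce to the finitely generated case via countable directed unions, and then reduce to the torsion-free case using Selberg's lemma combined with extension closure. A finitely generated torsion-free $G$ embeds in $\mathrm{GL}_n$ of a finitely generated subring of $K$, and the classical construction (following the argument establishing Kropholler's theorem that countable linear groups over characteristic zero belong to ${\scriptstyle\bf H}\mathfrak F$) produces a proper $G$-action on a finite-dimensional contractible CAT(0) complex $X$ built from symmetric spaces at the archimedean places and Bruhat--Tits buildings at a finite set of non-archimedean places. For any finite $F \leq G$ the Bruhat--Tits fixed point theorem gives $X^F \neq \emptyset$, and CAT(0)-convexity makes $X^F$ contractible, which is precisely the intermediary condition in Definition~\ref{sc:01}. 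Cell stabilizers are linear groups of strictly smaller complexity, and induction on a suitable complexity function (for instance the Hirsch length of the Zariski closure, or the dimension of the ambient reductive group together with the transcendence degree of the field of definition) places them in smaller classes \hff{\alpha}.

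The principal obstacle is engineering this complexity descent in the linear case: the cell stabilizers must provably land in a smaller ordinal of the \hff{}-hierarchy, which is the point at which Kropholler's original argument for ${\scriptstyle\bf H}\mathfrak F$ becomes delicate. The adaptation to the refined class \hff{} is made essentially automatic by the CAT(0) geometry of the spaces involved, which supplies the fixed-point contractibility demanded by Definition~\ref{sc:01} at no extra cost beyond Kropholler's argument. Once the descent works for finitely generated $G$, countable directed-union closure lifts the conclusion to all countable linear groups, completing the proof.
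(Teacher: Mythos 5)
Your elementary amenable argument is correct and is essentially the paper's: the paper likewise starts from finitely generated abelian groups (your explicit $\Z^n\oplus T$ action on $\R^n$ is a clean way to see these lie in \hff{1}) and then runs transfinite induction along the Kropholler--Linnell--Moody hierarchy using extension closure (Proposition \ref{exten}) and countable directed union closure (Proposition \ref{sc:02}).

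The linear case, as written, has a gap that you yourself flag: the ``complexity descent'' placing the cell stabilizers of the symmetric-space/building action into strictly smaller classes \hff{\alpha} is the hard core of Kropholler's argument, and you do not carry it out. But notice that your own reductions make this entire detour unnecessary. Once you have reduced, via countable directed unions and Selberg's lemma together with closure under finite extensions, to a finitely generated \emph{torsion-free} linear group $G$, the only subgroup in $\mF\cap G$ is the trivial one, so the intermediary condition ``$X^F$ contractible for all $F\in\mF\cap G$'' degenerates to ``$X$ contractible'' --- which is exactly membership in Kropholler's class \hf{}. A short transfinite induction (all isotropy groups of the witnessing actions are subgroups of $G$, hence torsion-free) shows that every torsion-free \hf{}-group lies in \hff{}. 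So you can simply quote Kropholler's theorem \cite[2.5]{krop} that countable linear groups in characteristic $0$ lie in \hf{}, restrict to the torsion-free ones, and conclude by finite-extension closure. This is precisely what the paper does (it invokes Borel's theorem, via Alperin--Shalen, for virtual torsion-freeness rather than Selberg, which is immaterial). The CAT(0) fixed-point discussion for finite subgroups is not wrong, but it does no work after the reduction to the torsion-free case, and without that reduction you would indeed be stuck re-proving Kropholler's descent.
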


\begin{proof}
{\it Elementary amenable groups:} \hff{}\  contains all finitely generated abelian groups. Now, recalling Kropholler, Linnel and Moody's inductive definition of elementary amenable groups (see \cite{KLM}), by transfinite induction, using extensions and countable directed union closure, it follows that \hff{} contains all countable elementary amenable groups.

{\it Linear groups:} Since \hff{}\ is closed under countable directed unions, it suffices to consider finitely generated linear groups. In \cite[2.5]{krop}, Kropholler proves that countable linear groups over a field of characteristic 0 belong to \hf{}, making crucial use of results of Alperin and Shalen (see \cite{alperin}). Since torsion free \hf{}-groups belong to \hff{}, it follows that torsion free linear groups belong to \hff{}. Now, by a theorem of Borel (cf. Theorem 1.5 in \cite{alperin}) every finitely generated linear group is virtually torsion free. Since \hff{} is closed under finite extensions, the result follows.
\end{proof}

\section{Finiteness Properties}
A weaker notion than  (co)homological dimension called (co)homological jump height has been introduced in \cite{petro} and many of its finiteness properties  have been considered in \cite{petro2}. When studying group actions on certain complexes, we have seen that it is often more natural to utilize jump height rather than (co)homological dimension. Here, we extend the definition of jump (co)homology and jump height to Bredon cohomology and consider some applications.

\begin{defn}{\label{sc:04}}{\normalfont A discrete group $G$ has
{\it jump $\mF$-Bredon (co)homology} if there exists an
integer $k\geq 0$, such that for each subgroup $H$ of $G$ we have
$hd_{\mF\cap H}(H)=\infty$ ($cd_{\mF\cap H}(H)=\infty$) or $hd_{\mF\cap H}(H)\leq k$
($cd_{\mF\cap H}(H)\leq k$).  The minimum of all such $k$ is
 called {\it jump  height} and denoted $hjh_{\mF}(G)$ ($cjh_{\mF}(G)$).}
\end{defn}

First, we need a generalization of Benson's lemma (see \cite[5.6]{ben}) that applies in our context.

\begin{lem}{\label{benson}}{\normalfont Suppose that $G_{\alpha}$ ($0\leq \alpha<\gamma$,  $G_0=\{e\}$) is an ascending chain of groups with union $G=\cup_{\alpha<\gamma} G_{\alpha}$, for some ordinal $\gamma$. Let $\mF$ be a family of subgroups of $G$ such that every $H\in \mF$ is a subgroup of  some $G_{\alpha}$. If $M$ is a module in Mod$_\mF G$ whose restriction to each subgroup $G_{\alpha}$ is projective in Mod$_{\mF\cap G_{\alpha}} G_{\alpha}$, then $M$ has projective dimension at most one.}
\end{lem}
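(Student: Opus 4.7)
The strategy is to adapt the classical proof of Benson's lemma (cf.\ \cite[5.6]{ben}) to the Bredon setting. I aim to construct an explicit length-one projective resolution
$$0\to P_1\longrightarrow P_0\longrightarrow M\to 0$$
in $\orbmod$, thereby forcing the projective dimension of $M$ to be at most one.

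For each $\alpha<\gamma$, set $N_\alpha=\mbox{Ind}_{G_\alpha}^{G}\mbox{Res}_{G_\alpha}^{G}M$. By hypothesis $\mbox{Res}_{G_\alpha}^{G}M$ is projective in $\mbox{Mod}_{\mF\cap G_\alpha}G_\alpha$; since induction is left adjoint to restriction (implicit in Lemma \ref{ind}), it preserves projectives, so each $N_\alpha$ is projective in $\orbmod$. Take $P_0=\bigoplus_{\alpha<\gamma}N_\alpha$ and let $\epsilon\colon P_0\to M$ be the sum of the counits $\epsilon_\alpha\colon N_\alpha\to M$. That $\epsilon$ is surjective on each $G/H\in\orb$ uses the hypothesis on $\mF$: some $\alpha$ satisfies $H\leq G_\alpha$, and then the counit $\epsilon_\alpha$ is surjective when evaluated at $G/H$.

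The core of the proof is then to show that $K=\ker\epsilon$ is itself projective. For $\alpha<\beta$, the inclusion $G_\alpha\hookrightarrow G_\beta$ furnishes a natural map $g_{\alpha\beta}\colon N_\alpha\to N_\beta$ with $\epsilon_\beta\circ g_{\alpha\beta}=\epsilon_\alpha$. Following Benson, I would take $P_1=\bigoplus_{\alpha<\gamma}N_\alpha$ and define $d\colon P_1\to P_0$ on the $\alpha$-summand, when $\alpha=\beta+1$ is a successor, by $x\mapsto x_{[\alpha]}-g_{\beta\alpha}(x)_{[\beta]}$, with a colimit prescription at limit $\alpha$. A telescoping argument should then identify $\mathrm{im}\,d=K$ and show that $d$ is injective, realising $K$ as a direct sum of the projective modules $N_\alpha$, and hence projective.

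The principal obstacle is handling limit ordinals in the definition of $d$. Benson's original device is built for $\gamma=\omega$, where each index has a unique immediate predecessor and the telescoping inverse is transparent. For general $\gamma$ one must either proceed by transfinite induction on $\gamma$ (exploiting continuity under directed colimits at limit stages) or replace $P_1$ by an index set of pairs $\alpha<\beta$ in order to encode all the coherence relations among the $g_{\alpha\beta}$ at once. Either route requires careful bookkeeping to guarantee that the resolution remains of length one across limit stages; this is the delicate point of the argument.
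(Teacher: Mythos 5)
Your setup is the right one: the projective module $P_0=\bigoplus_{\alpha<\gamma}\mbox{Ind}_{G_\alpha}^{G}\mbox{Res}_{G_\alpha}^{G}M$ with the sum of counits is exactly the surjection the paper uses (there written as $M\otimes\bigoplus_{\delta}\Z[\mbox{map}_G(-,G/G_{\delta})]$, which is the same module by Lemma \ref{ind}), and the surjectivity argument via the hypothesis on $\mF$ is correct. But the proof is not complete: the entire content of the lemma is that the kernel of $\epsilon$ is projective, and you only establish this for $\gamma=\omega$. The telescope $x\mapsto x_{[\alpha]}-g_{\beta\alpha}(x)_{[\beta]}$ genuinely breaks at limit ordinals --- a limit ordinal $\lambda$ has no immediate predecessor, so a component supported at $\lambda$ in the kernel cannot be reached by any finite telescoping, and there is no ``colimit prescription'' that makes $d$ both injective and surjective onto $\ker\epsilon$ while keeping $P_1$ a direct sum of the $N_\alpha$. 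Since the lemma is applied in Theorem \ref{sc:05} to uncountable groups written as unions over arbitrary ordinals $\gamma$, the limit case is precisely the case that matters and cannot be waved at as ``careful bookkeeping.''

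The paper closes this gap by a different device, which is the one you should adopt. Set $Q_\alpha=M\otimes\bigoplus_{\delta\geq\alpha}\Z[\mbox{map}_G(-,G/G_{\delta})]$, so each $Q_\alpha$ is projective and $Q_0=P_0$. The maps $G/G_\delta\to G/G_{\max\{\beta,\delta\}}$ induce epimorphisms $Q_\alpha\to Q_\beta$ for $\alpha\leq\beta$, compatible with the augmentations to $M$, and $\varinjlim_\alpha Q_\alpha=M$. Let $P_\alpha=\ker(Q_0\to Q_\alpha)$ and $P_\gamma=\ker(Q_0\to M)$. Because each $Q_{\alpha+1}$ is projective, $Q_\alpha\to Q_{\alpha+1}$ splits, so $P_{\alpha+1}/P_\alpha$ is projective; the colimit identification gives continuity of the chain $\{P_\alpha\}$ at limit ordinals and $P_\gamma=\cup_{\alpha<\gamma}P_\alpha$. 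A transfinite induction then yields $P_\gamma\cong\bigoplus_{\alpha<\gamma}P_{\alpha+1}/P_\alpha$, which is projective, and $0\to P_\gamma\to Q_0\to M\to 0$ is the desired length-one resolution. In other words, the paper replaces your single telescoping differential by a continuous transfinite filtration of the kernel with projective split subquotients; that is the idea your proposal is missing.
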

\begin{proof}Denote $Z_{G_{\alpha}}=\bigoplus_{\delta\geq\alpha}\Z[\mbox{map}_G(-, G/G_{\delta})]$.
Since $ M\otimes Z_{G_{\alpha}}\cong \bigoplus_{\delta\geq\alpha}\mbox{Ind}_{G_{\delta}}^{G}{\mbox{Res}}_{G_{\delta}}^{G}M$, by adjointness of induction and restriction and definition of projectives, we can deduce that $Q_{\alpha}:= M\otimes Z_{G_{\alpha}}$ is projective.

Given $\alpha\leq \beta$, we define an epimorphism $\psi_{\alpha}^{\beta}:Z_{G_{\alpha}}\to Z_{G_{\beta}}$ as follows: the summand $\Z[\mbox{map}_G(-, G/G_{\delta})]$ of $Z_{G_{\alpha}}$ is mapped to the summand $\Z[\mbox{map}_G(-, G/G_{\mu})]$ of $Z_{G_{\beta}}$, where $\mu=\mathrm{max}\{\beta, \delta\}$, by the natural quotient map $G/G_{\delta}\to G/G_{\mu}$. Note that $\psi_{\alpha}^{\beta}$ induces an epimorphism $id_M\otimes\psi_{\alpha}^{\beta}:Q_{\alpha}\to Q_{\beta}$.

Let $P_{\alpha}$ be the kernel of $id_M\otimes\psi_{0}^{\alpha}:Q_{0}\to Q_{\alpha}$. If $\alpha\leq\beta<\gamma$, then $P_{\alpha}\leq P_{\beta}$.  Since $Q_{\beta}$ is projective, the epimorphism $id_M\otimes\psi_{\alpha}^{\beta}:Q_{\alpha}\to Q_{\beta}$ must split. Then, the kernel which is isomorphic to $P_{\beta}/P_{\alpha}$, is projective.

Also, it follows from our hypothesis that for each $\alpha$, there exists an epimorphism from $Z_{G_{\alpha}}$ onto $\Z[\mbox{map}_G(-, G/G)]\cong \underline{\Z}$ that maps each summand  $\Z[\mbox{map}_G(-, G/G_{\delta})]$ of $Z_{G_{\alpha}}$ to  $\Z[\mbox{map}_G(-, G/G)]$ via the trivial quotient map $G/G_{\delta}\to G/G$.  This epimorphism induces an epimorphism $\phi_{\alpha}:Q_{\alpha}\to M$. Note that for each $\alpha\leq \beta$, we have $\phi_{\beta}\circ (id_M\otimes\psi_{\alpha}^{\beta})=\phi_{\alpha}$.

Let $P_{\gamma}$ the kernel of the induced epimorphism from $Q_{0}$ to $M$. The fact that $\varinjlim Z_{G_{\alpha}}=\underline{\Z}$, implies that $\varinjlim Q_{\alpha}=M$. So, we can deduce that $P_{\gamma}= \cup_{\alpha<\gamma} P_{\alpha}$. Now, by using transfinite induction, it is not difficult to see that,
$$P_{\gamma}\cong \bigoplus_{\alpha<\gamma} P_{\alpha+1}/P_{\alpha}$$ and hence it is projective. Since $M\cong P/P_{\gamma}$, it has projective dimension at most one.\end{proof}

Following the notation of the previous section, let $hd_{\mF\cap G}(G)$ ($cd_{\mF\cap G}(G)$) denote the Bredon (co)homological dimension of the group $G$ with respect to a class of groups $\mF$.

The next result is a generalization of Theorem 3.3 of \cite{petro}, Theorem 8 of \cite{tal}, and of Proposition 3.1 of \cite{JN}.

\begin{thm}{\label{sc:05}}{\normalfont Let $\mF$ be a subgroup closed class of groups and let $\mX$ be a class of groups with finite Bredon (co)homological dimension with respect to $\mF$.
Suppose $G$ is in ${{\scriptstyle\bf H}}^\mathfrak{F}\mX$ or $\mF\cap G$ consists of groups that are subgroups of finitely generated subgroups of $G$ and $G$ is in ${{\scriptstyle\mathrm{L}\bf H}}^\mathfrak{F}\mX$. Then $G$ has finite Bredon (co)homological dimension  if and only if $G$ has jump Bredon (co)homology with respect to $\mF$.}
\end{thm}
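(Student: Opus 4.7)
The forward implication is immediate: the restriction functor is exact and preserves projectives (used in the proof of Proposition \ref{algseq}), so applying $\mathrm{Res}_{H}^{G}$ to a projective resolution of $\underline{\Z}$ over $G$ of length $cd_{\mF}(G)$ gives a projective resolution over $H$ of the same length; hence $cd_{\mF\cap H}(H)\leq cd_{\mF}(G)$ for every subgroup $H$, so finite $cd_{\mF}(G)$ forces $hjh_{\mF}(G)<\infty$ (and analogously in homology).  Assume from now on that $G$ has jump $\mF$-Bredon (co)homology of height $k$, and the goal is to bound $cd_{\mF}(G)$.

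\textbf{Case 1: $G\in{{\scriptstyle\bf H}}^\mathfrak{F}\mX$.}  Transfinite induction on the least ordinal $\alpha$ with $G\in\hix{\alpha}$.  The base case $\alpha=0$ is the hypothesis that every member of $\mX$ has finite Bredon dimension with respect to $\mF$.  For $\alpha>0$, unwind the definition to produce a finite-dimensional $G$-CW-complex $X$ whose isotropy subgroups lie in $\hix{\beta}$ for various $\beta<\alpha$ and for which $X^F$ is contractible whenever $F\in\mF\cap G$.  Each vertex stabilizer $G_{v}$ is a subgroup of $G$, hence inherits jump height $\leq k$, and also lies in $\hix{\beta_{v}}$ for some $\beta_{v}<\alpha$.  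The inductive hypothesis gives $cd_{\mF\cap G_{v}}(G_{v})<\infty$, and the jump property then forces $cd_{\mF\cap G_{v}}(G_{v})\leq k$.  Corollary \ref{main5} applied to $X$ produces
\[
cd_{\mF}(G)\leq k+\dim(X)<\infty.
\]

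\textbf{Case 2: $G\in{{\scriptstyle\mathrm{L}\bf H}}^\mathfrak{F}\mX$ with every $H\in\mF\cap G$ contained in a finitely generated subgroup of $G$.}  By the local-$\hix{}$ hypothesis every finitely generated subgroup $F\leq G$ lies in ${{\scriptstyle\bf H}}^\mathfrak{F}\mX$, inherits jump height $\leq k$, so by Case 1 satisfies $cd_{\mF\cap F}(F)\leq k$.  Construct a well-ordered ascending chain $\{G_{\alpha}\}_{\alpha<\gamma}$ of subgroups with $G_{0}=1$ and $\bigcup G_{\alpha}=G$, arranged so that every $H\in\mF\cap G$ sits inside some $G_{\alpha}$ --- for countable $G$ the ascending chain of finitely generated subgroups works directly, and in general one builds the chain transfinitely out of the controlling finitely generated subgroups provided by the hypothesis on $\mF\cap G$.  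A subsidiary transfinite induction, invoking Case 1 at successor steps and Corollary \ref{sc:08} at countable-cofinality limits, yields a finite uniform bound $N$ on $cd_{\mF\cap G_{\alpha}}(G_{\alpha})$.  Now let $P_{\ast}\twoheadrightarrow\underline{\Z}$ be a free resolution in $\mbox{Mod}_{\mF}G$ and let $M$ be the $N$-th syzygy.  Since restriction is exact and sends free modules to projective ones, $\mathrm{Res}_{G_{\alpha}}^{G}M$ is an $N$-th syzygy of $\underline{\Z}$ in $\mbox{Mod}_{\mF\cap G_{\alpha}}G_{\alpha}$, and is therefore projective.  Lemma \ref{benson} applies to the chain and shows that $M$ has projective dimension at most one, whence $cd_{\mF}(G)\leq N+1<\infty$.

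\textbf{Main difficulty.}  The routine part is Case 1, where the spectral sequence of Section 3 together with the transfinite definition of $\hix{}$ does all the work.  The genuinely delicate point is the construction in Case 2 of the ascending chain $\{G_{\alpha}\}$ that simultaneously exhausts $\mF\cap G$ and carries a uniform finite bound on $cd_{\mF\cap G_{\alpha}}(G_{\alpha})$.  Here the hypothesis that $\mF\cap G$ is controlled by finitely generated subgroups is essential, because it allows the Case 1 bound on finitely generated subgroups to be propagated through the chain via Corollary \ref{sc:08}, and the jump property to be inherited at every intermediate level.
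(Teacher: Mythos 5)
Your proposal is correct and takes essentially the same route as the paper: Case 1 is the paper's transfinite induction on the ordinal defining $\hix{}$ combined with the spectral sequence (the paper invokes Theorem \ref{main} directly where you invoke its Corollary \ref{main5}), and Case 2 is the paper's application of the Benson-type Lemma \ref{benson} to a syzygy of a projective resolution over an ascending chain exhausting $\mF\cap G$. The one organizational difference is that the paper runs Case 2 as an induction on the cardinality of $G$, applying Lemma \ref{benson} at every uncountable stage; this is the cleaner way to handle limit ordinals of uncountable cofinality, where your appeal to Corollary \ref{sc:08} would not apply, though the fix is available to you since you already have Lemma \ref{benson} in hand.
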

\begin{proof} We will prove the result for cohomology. The homological version is analogous.

One direction is trivial. So, let $G$  have jump Bredon cohomology with respect to $\mF$. Suppose $G$ is in \hix{}. Since $G$ is in ${{{\scriptstyle\bf H}}_{\alpha}^\mathfrak{F}\mX}$ for some ordinal $\alpha$, we can assume by transfinite induction, there exists a $G$-CW-complex $X$ such that for each  $F\in \mF\cap G$ the subcomplex $X^F$ is contractible and that each isotropy group has finite Bredon cohomological dimension bounded by ${cjh}_{\mF}(G)$. Now, applying Theorem \ref{main}, we can deduce that $\mathrm{H}^{i}_{\mF}(G, M)=0$ for all $i>{cjh}_{\mF}(G)+\mbox{dim}(X)$. Therefore, $G$ has finite Bredon cohomological dimension equal to ${cjh}_{\mF}(G)$.

Suppose $G$ is in ${{\scriptstyle\mathrm{L}\bf H}}^\mathfrak{F}\mX$ and $\mF$ consists of finitely generated groups. We can assume that $G$ is uncountable. Then, it can be expresses  as an ascending union of subgroups $G=\cup_{\alpha<\gamma} G_{\alpha}$, for some ordinal $\gamma$, such that each subgroup $G_{\alpha}$ has strictly smaller cardinality than $G$. Let $P_{\ast}\twoheadrightarrow \underline{\Z}$ be a projective resolution of $\underline{\Z}$ in Mod$_{\mF}G$ and denote $d={cjh}_{\mF}(G)$. According to Lemma \ref{benson}, the module $P_{d}$  has projective dimension at most 1. Hence, $\underline{\Z}$ has projective dimension at most $d+1$. It follows that $\cd(G)\leq d+1$, and hence $\cd(G)= d$.
\end{proof}

\begin{remk}{\label{sc:05.5}}{\normalfont We give an alternative proof of the first part of this result.

Suppose $G$ is countable. Let $X$ be a $G$-CW-complex  such that for each  $F\in \mF\cap G$ the subcomplex $X^F$ is contractible and that each isotropy group has finite Bredon cohomological dimension bounded by ${cjh}_{\mF}(G)$. Since $X^F$ is contractible for each $F\in \mF\cap G$, the chain complex ${C}^{\mF}_{\ast}(X, \underline{\Z})$ is exact.  By the induction, ${pl}_{\mF\cap G_{\sigma}}(\underline{\Z})\leq {cjh}_{\mF}(G)$ for each $G_{\sigma}$. Since  ${C}^{\mF}_{i}(X, \underline{\Z})\cong \bigoplus_{\sigma\in \Sigma_i}{\mbox{Ind}}_{G_{\sigma}}^{G}\underline{\Z}$, we obtain ${pl}_{\mF\cap G_{\sigma}}({C}^{\mF}_{i}(X, \underline{\Z}))\leq {cjh}_{\mF}(G)$ for each $i$. It follows that ${pl}_{\mF}(\underline{\Z})\leq {cjh}_{\mF}(G)+\mbox{dim}(X)$. Since $G$ has jump Bredon  cohomology, this shows ${pl}_{\mF}(\underline{\Z})= {cjh}_{\mF}(G)$.}
\end{remk}

\begin{remk}{\label{sc:05.6}} {\rm Note that by a result of L\"{u}ck \cite{luck1}, Theorem \ref{sc:05} establishes in particular that:
\begin{itemize}
\item[(i)] every ${{\scriptstyle\bf H}}^\mathfrak{F}\mF$-group  has jump $\mF$-Bredon cohomology if and only if $G$ has a finite dimensional model for ${E}_{\mF}G$.
\item[(ii)] every ${{\scriptstyle\mathrm{L}\bf H}}^\mathfrak{F}\mF$-group  has jump $\mF$-Bredon cohomology if and only if $G$ has a finite dimensional model for ${E}_{\mF}G$, where $\mF$ is the class of finite groups or the class of virtually cyclic groups.
\end{itemize}}
 \end{remk}
%

 \begin{defn}{\rm  A group $G$ has periodic $\mathfrak F$-Bredon cohomology with period $q>0$ after $k$ steps, if the functors $\mathrm{H}^n_{\mF}(G,-)$ and $\mathrm{H}^{n+q}_{\mF}(G,-)$ are naturally equivalent for all $n>k$.}\end{defn}

Using the Eckmann-Shapiro lemma  it is easily proved that periodic Bredon cohomology after some steps is a subgroup closed property, i.e. every subgroup $S$ of $G$ has periodic $\mathfrak F\cap S$-Bredon cohomology with period $q$   after $k$ steps.
\begin{cor}{\rm  Let $G$ be in ${{\scriptstyle\mathrm{L}\bf H}}^\mathfrak{F}\mF$, where $\mF$ is the class of finite groups. Then $G$ has periodic Bredon cohomology after some steps if and only if $G$ admits a finite dimensional model for $\underline{E}G$.}\end{cor}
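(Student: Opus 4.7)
My plan is to bridge the two sides via Remark~\ref{sc:05.6}(ii), which says that a group $G\in{{\scriptstyle\mathrm{L}\bf H}}^\mathfrak{F}\mF$ admits a finite dimensional model for $\underline{E}G$ if and only if it has jump $\mF$-Bredon cohomology. Thus, under the hypothesis $G\in{{\scriptstyle\mathrm{L}\bf H}}^\mathfrak{F}\mF$, it is enough to show that periodic $\mF$-Bredon cohomology after some steps is equivalent to jump $\mF$-Bredon cohomology.

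The forward direction is nearly immediate. If $\underline{gd}(G)<\infty$, then the augmented cellular chain complex of a finite dimensional model for $\underline{E}G$ yields a projective resolution of $\underline{\Z}$ of finite length, so $\mathrm{H}^{n}_{\mF}(G,-)$ is the zero functor for all $n>\underline{gd}(G)$. These zero functors are trivially naturally equivalent, which gives periodicity (with any period $q>0$) after $\underline{gd}(G)$ steps.

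For the converse, I would assume $G$ has periodic $\mF$-Bredon cohomology with period $q>0$ after $k$ steps. By the subgroup-closed property of periodicity noted immediately above the corollary (a consequence of Eckmann--Shapiro), every subgroup $H\leq G$ inherits the same periodicity for its $\mF\cap H$-Bredon cohomology with period $q$ after $k$ steps. Now fix any such $H$ with $\underline{cd}(H)<\infty$: since $\mathrm{H}^{n}_{\mF\cap H}(H,-)=0$ for all $n\gg 0$, iterating the natural isomorphisms $\mathrm{H}^{n}_{\mF\cap H}(H,-)\cong\mathrm{H}^{n+q}_{\mF\cap H}(H,-)$ downward in steps of $q$ forces $\mathrm{H}^{n}_{\mF\cap H}(H,-)=0$ for every $n>k$, hence $\underline{cd}(H)\leq k$. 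Therefore $G$ has jump $\mF$-Bredon cohomology with $cjh_{\mF}(G)\leq k$, and invoking Remark~\ref{sc:05.6}(ii) supplies a finite dimensional model for $\underline{E}G$. The only nontrivial ingredient is the subgroup-closed behavior of periodicity, but this has already been recorded by the authors just before the corollary, so the entire argument reduces to the short bookkeeping sketched above.
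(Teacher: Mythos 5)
Your proposal is correct and follows essentially the same route as the paper: the forward direction is the trivial vanishing above $\underline{cd}(G)$, and the converse uses the subgroup-closedness of periodicity (via Eckmann--Shapiro) together with the period-$q$ isomorphisms to force $\underline{cd}(H)\leq k$ for every subgroup $H$ of finite Bredon cohomological dimension, i.e.\ jump Bredon cohomology, and then Theorem~\ref{sc:05} combined with L\"uck's theorem (equivalently, Remark~\ref{sc:05.6}(ii)). The only cosmetic difference is that the paper phrases the key step as a contradiction at the top degree $r=\underline{cd}(S)$, while you iterate the isomorphisms downward to kill all degrees above $k$; these are the same argument.
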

\begin{proof} Assume $G$ has periodic Bredon cohomology with period $q$ after $k$ steps, and let $S$ be a subgroup with $\underline{cd}(S)=r$. Then $\mathrm{H}^r_{\mF\cap S}(S,M)\not=0$ for some $\orbs$-module $M$. If $r>k$ then $\mathrm{H}^{r+q}_{\mF\cap S}(S,M)\cong\mathrm{H}^r_{\mF\cap S}(S,M)\not=0$ a contradiction, since $\mathrm{H}^{r+q}_{\mF\cap S}(S,-)=0$. It follows that $r\leq k$, so $G$ has jump Bredon cohomology, and applying Theorem \ref{sc:05} and L\"{u}ck's theorem \cite{luck1} we have that $G$ admits a finite dimensional model for $\underline{E}G$.

The converse is trivial, since $\mathrm{H}^n_{\mF}(G,-)= 0=\mathrm{H}^{n+q}_{\mF}(G,-)$ for any $q>0$ and $n>\underline{cd}(G)$. \end{proof}

\section{Applications for elementary amenable groups}
Lastly, we  derive some corollaries of Theorem \ref{sc:05} regarding elementary amenable groups.

\begin{prop}{\label{sc:09}}{\normalfont Let $\mathbb F$ be a field of characteristic zero and suppose $G$ is an elementary amenable group. Then, the conditions that $G$ has finite Hirsch length, $G$ has finite homological dimension over $\mathbb F$, $G$ has jump homology over $\mathbb F$, $G$ has jump Bredon homology, and $G$ has finite Bredon homological dimension are equivalent. Moreover,
$$h(G)=hd_{\mathbb F}(G)=hjh_{\mathbb F}(G)=\underline{hd}(G)=\underline{hjh}(G).$$}
\end{prop}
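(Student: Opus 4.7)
The plan is to establish the cyclic chain
\[
h(G) \le hd_{\mathbb F}(G) \le \underline{hd}(G) \le \underline{hjh}(G) \le h(G),
\]
together with the parallel bound $hd_{\mathbb F}(G) \le hjh_{\mathbb F}(G) \le h(G)$. Closing these cycles forces finiteness of any one quantity to imply finiteness of all, and forces pairwise equality when finite.

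The first two inequalities are the standard ones. For $h(G) \le hd_{\mathbb F}(G)$, every polycyclic subgroup $P \le G$ of Hirsch length $n$ is torsion-free-by-finite and realizes $hd_{\mathbb F}(P) = n$ via its closed aspherical nilmanifold model, so the supremum over such $P$ gives the bound. For $hd_{\mathbb F}(G) \le \underline{hd}(G)$, evaluating an $\orb$-projective resolution of $\underline{\Z}$ of length $\underline{hd}(G)$ at $G/\{1\}$ yields a $\Z G$-projective resolution of $\Z$, which tensored with $\mathbb F$ computes $hd_{\mathbb F}(G)$. Finally $\underline{hd}(G) \le \underline{hjh}(G)$ and $hd_{\mathbb F}(G) \le hjh_{\mathbb F}(G)$ are immediate from the definition of jump height whenever the right-hand sides are finite (if they are infinite there is nothing to prove for the corresponding step).

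The heart of the proof is the last inequality $\underline{hjh}(G) \le h(G)$. By Theorem \ref{sc:03}, every countable elementary amenable group lies in $\hff{}$, so $G \in {{\scriptstyle\mathrm{L}\bf H}}^\mathfrak{F}\mF$; since $\mF$ is the class of finite groups, the elements of $\mF \cap G$ are automatically contained in finitely generated subgroups of $G$, so Theorem \ref{sc:05} applies and reduces the claim to showing $\underline{hd}(G) \le h(G)$ whenever $h(G) < \infty$. In the torsion-free case this is Hillman--Linnell ($\underline{hd}(G) = hd_{\Z}(G) = h(G)$); in general one combines the classical bound on the orders of finite subgroups of an elementary amenable group of finite Hirsch length with the Kropholler--Mart\'inez-P\'erez--Nucinkis construction of low-dimensional models for $\underline{E}G$. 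The parallel statement $hjh_{\mathbb F}(G) = hd_{\mathbb F}(G)$ is the ordinary $\mathbb F$-homological analog of Theorem \ref{sc:05}, supplied by Proposition 3.1 of \cite{JN}.

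The main obstacle is obtaining the sharp bound $\underline{hd}(G) \le h(G)$ in the torsion case: the usual constructions of $\underline{E}G$ for elementary amenable groups tend to produce geometric dimension $h(G)+1$, so to hit $h(G)$ on the nose one must either sharpen such constructions at the homological level or apply the spectral sequence of Theorem \ref{main} to a $G$-CW-complex whose top-dimensional cells carry finite stabilizers, letting the top-degree contribution be absorbed by a flatness argument over the characteristic-zero field $\mathbb F$. Verifying this homological refinement is the technical crux.
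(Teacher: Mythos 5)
Your overall architecture --- a cyclic chain of inequalities closed by Theorem \ref{sc:05} together with a ``dimension equals Hirsch length'' input --- matches the paper's, but there are two problems. First, a logical misattribution: the step $\underline{hd}(G)\le \underline{hjh}(G)$ (and likewise $hd_{\mathbb F}(G)\le hjh_{\mathbb F}(G)$) is \emph{not} immediate from the definition of jump height. If $\underline{hjh}(G)=k<\infty$, Definition \ref{sc:04} only tells you that each subgroup, including $G$ itself, has Bredon homological dimension either $\le k$ or $=\infty$; ruling out the value $\infty$ is precisely the nontrivial direction of Theorem \ref{sc:05}. Conversely, the inequality $\underline{hjh}(G)\le h(G)$, which you call the heart of the proof and try to route through Theorem \ref{sc:05}, does not need that theorem at all: once one knows $\underline{hd}(H)\le h(H)\le h(G)$ for every subgroup $H$, the jump bound follows directly from the definition. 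So Theorem \ref{sc:05} belongs to the step you dismissed as definitional, and the geometric/homological input belongs to the step where you invoked Theorem \ref{sc:05}.

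Second, and more seriously, you leave the linchpin $\underline{hd}(G)\le h(G)$ unproven, describing it as an unverified ``technical crux'' and sketching two possible strategies without carrying either one out. The paper closes this step with a single citation: Theorem 1 of Flores and Nucinkis \cite{FN} states that $h(G)=\underline{hd}(G)$ for every elementary amenable group $G$, which is exactly the sharp bound you were worried about. (Your concern that the usual constructions of $\underline{E}G$ only achieve dimension $h(G)+1$ is about \emph{geometric} dimension; only the homological equality is needed here, and that is what \cite{FN} proves.) The paper otherwise proceeds as you do: the inequalities $\underline{hd}(G)\ge hd_{\mathbb F}(G)\ge h(G)$ are quoted from the analogues of results of Nucinkis and Hillman, and the two jump-height equalities come from Theorem \ref{sc:05} applied once with $\mF$ the trivial class and $\mX$ the finite groups (over $\mathbb F$), and once with $\mX=\mF$ the class of finite groups. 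As written, your proposal therefore has a genuine gap at its central step; with the Flores--Nucinkis citation inserted and the roles of Theorem \ref{sc:05} corrected, it becomes essentially the paper's proof.
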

\begin{proof} By the analogues of Proposition 4.3 of \cite{nuc} and Lemma 2 of \cite{hill}  (see \cite[p.5]{FN}), $\underline{hd}(G)\geq hd_{\mathbb F}(G)\geq h(G)$.  But, by Theorem 1 of Flores and Nucinkis in \cite{FN}, $h(G)=\underline{hd}(G)$.

The second equality  is a consequence of Theorem \ref{sc:05} assuming $\mF$ is the trivial class, $\mX$ is the class of finite groups and the homology is considered over the field $\mathbb F$. The last equality follows again from Theorem \ref{sc:05} where $\mX=\mF$ is the class of finite groups.
\end{proof}

The next theorem is a generalization of Corollary 4.3 of \cite{JN}.

\begin{prop}{\label{sc:10}}{\normalfont
Suppose $\mathbb F$ be a field of characteristic zero and $G$ is an elementary amenable group. Let $X$ be
an $n$-dimensional $G$-CW-complex. Suppose there exists an integer
$k$ such that for
each $i > k$, $\mathrm{H}_i(X, \mathbb F)=0$  and $\mathrm{H}_k(X, \mathbb Z)\cong {\mathbb Z}^m \oplus A$ where $m>0$
and  $A$ is a finite group.
If all the stabilizer subgroups of the action of $G$ on $X$ have finite Bredon homological dimension
uniformly bounded by an integer $b$, then
$$\underline{hd}(G)\leq b + n - k +\displaystyle { {1\over 2}{m(m-1)}}.$$}
\end{prop}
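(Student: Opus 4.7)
The plan is to invoke Proposition \ref{sc:09}, which for elementary amenable $G$ identifies $\underline{hd}(G) = hd_{\mathbb F}(G) = h(G)$ (the Hirsch length), and then to split this invariant along the kernel of the natural representation of $G$ on $H_k(X, \mathbb F)$. Set $M := H_k(X, \mathbb F)$; since $A$ is finite and $\mathrm{char}\,\mathbb F = 0$, we have $M \cong \mathbb F^m$, and let $K = \ker\bigl(G \to GL(M)\bigr)$. By additivity of Hirsch length for elementary amenable extensions, $h(G) = h(K) + h(G/K)$, so it suffices to prove
\begin{equation*}
h(K) \leq n + b - k \quad \text{and} \quad h(G/K) \leq \tfrac{1}{2}m(m-1).
\end{equation*}

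For the first inequality, I would run both spectral sequences converging to the equivariant hyperhomology $H^K_*(X, M_0)$ for an arbitrary $\mathbb F K$-module $M_0$. The cellular filtration yields $E^1_{p,q} = \bigoplus_{\sigma \in \Sigma_p} H_q(K_\sigma, M_0|_{K_\sigma})$; each $K_\sigma \leq G_\sigma$ is elementary amenable with $\underline{hd}(K_\sigma) \leq \underline{hd}(G_\sigma) \leq b$, and Proposition \ref{sc:09} upgrades this to $hd_{\mathbb F}(K_\sigma) \leq b$. Hence $E^1_{p,q} = 0$ whenever $p > n$ or $q > b$, so $H^K_i(X, M_0) = 0$ for $i > n + b$. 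The second, isotropy, filtration yields $E^2_{p,q} = \mathrm{Tor}_p^{\mathbb F K}(M_0, H_q(X, \mathbb F))$, which is zero for $q > k$ by hypothesis, while the trivial $K$-action on $M$ identifies the top row as $E^2_{p,k} \cong H_p(K, M_0)^m$. Now suppose for contradiction $hd_{\mathbb F}(K) \geq n + b - k + 1$ and pick $M_0$ with $H_{n+b-k+1}(K, M_0) \neq 0$: outgoing differentials from $(n+b-k+1, k)$ land in the zero rows $q > k$, and incoming differentials originate from Tor groups of homological degree exceeding $hd_{\mathbb F}(K)$, hence are zero. Thus $E^\infty_{n+b-k+1, k} = E^2_{n+b-k+1, k} \neq 0$, forcing $H^K_{n+b+1}(X, M_0) \neq 0$, contradicting the cellular vanishing. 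So $h(K) = hd_{\mathbb F}(K) \leq n + b - k$.

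For the second inequality, the $G$-action on $H_k(X, \mathbb Z) \cong \mathbb Z^m \oplus A$ lands in $\mathrm{Aut}(\mathbb Z^m \oplus A)$, whose projection to $GL_m(\mathbb Z) \times \mathrm{Aut}(A)$ has finite kernel $\mathrm{Hom}(\mathbb Z^m, A)$; therefore $G/K$ is, modulo a finite normal subgroup and a finite-index subgroup, a subgroup of $GL_m(\mathbb Z)$. By the Tits alternative together with Mal'cev's theorem, an elementary amenable subgroup of $GL_m(\mathbb Z)$ is virtually polycyclic, and I would appeal to the classical Hirsch-length bound $\tfrac{1}{2}m(m-1)$: after passing to a triangularizable finite-index subgroup, the diagonal torus has no Hirsch-length contribution (its $\mathbb Z$-points being controlled by Galois-conjugacy of eigenvalues and Dirichlet's unit theorem applied inside $GL_m(\mathbb Z)$), while the strictly upper triangular unipotent radical contributes exactly $\dim U_m = \tfrac{1}{2}m(m-1)$. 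I expect this linear-algebraic Hirsch-length bound to be the main obstacle --- the equivariant spectral sequence portion being routine given Theorem \ref{main} and Proposition \ref{sc:09} --- but once it is in hand, combining the two estimates yields $\underline{hd}(G) = h(G) \leq (n + b - k) + \tfrac{1}{2}m(m-1)$, as required.
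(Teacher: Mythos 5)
Your proposal does not follow the paper's route: the paper's own proof is a two-line citation of Proposition 5.1 of \cite{petro2} combined with Proposition \ref{sc:09}, whereas you reconstruct that external input from scratch. The architecture you choose --- split $h(G)=h(K)+h(G/K)$ along the kernel $K$ of the action on $H_k(X,\mathbb F)$, bound $h(K)$ via the two spectral sequences of the equivariant double complex, and bound $h(G/K)$ by linear algebra inside $GL_m(\mathbb Z)$ --- is the right one, but two of your key steps are justified incorrectly. First, in proving $h(K)\le n+b-k$ you assume $hd_{\mathbb F}(K)\ge n+b-k+1$, pick $M_0$ with $H_{n+b-k+1}(K,M_0)\ne 0$, and claim the incoming differentials at position $(n+b-k+1,k)$ vanish because they ``originate from Tor groups of homological degree exceeding $hd_{\mathbb F}(K)$''. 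They originate in degrees $n+b-k+1+r$, which exceed $n+b-k+1$ but need not exceed $hd_{\mathbb F}(K)$; the argument collapses precisely in the case you must exclude, namely $hd_{\mathbb F}(K)=\infty$. The repair is the jump-homology reduction this paper is built around: if $h(K)>n+b-k$, pass to a finitely generated subgroup $K'\le K$ with $n+b-k<h(K')<\infty$, so that $j:=hd_{\mathbb F}(K')=h(K')$ is finite by Proposition \ref{sc:09}; running your spectral sequences for the $K'$-action, the incoming differentials at $(j,k)$ now genuinely land above $hd_{\mathbb F}(K')$ and die, and the stabilizers $K'_\sigma\le G_\sigma$ still satisfy $hd_{\mathbb F}(K'_\sigma)\le b$.

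Second, the assertion that elementary amenable (hence virtually polycyclic) subgroups of $GL_m(\mathbb Z)$ have Hirsch length at most $\tfrac12 m(m-1)$ is true, but your sketch of why is not: the claim that ``the diagonal torus has no Hirsch-length contribution'' is false. The cyclic subgroup of $GL_2(\mathbb Z)$ generated by $\left(\begin{smallmatrix}2&1\\1&1\end{smallmatrix}\right)$ is diagonalizable over $\mathbb Q(\sqrt5)$ and realizes the bound $\tfrac12\cdot 2\cdot 1=1$ entirely through the torus, with trivial unipotent part. The correct mechanism is a trade-off: if the $\mathbb Q$-composition factors of $\mathbb Q^m$ under the Zariski closure have dimensions $d_1,\dots,d_s$, then the unipotent radical has Hirsch length at most $\sum_{i<j}d_id_j$, while the semisimple image has rank at most $\sum_i(d_i-1)$ by Dirichlet's unit theorem applied to the (unit) eigenvalues on each irreducible factor, and
$$\sum_{i<j}d_id_j+\sum_i(d_i-1)\ \le\ \tfrac12 m(m-1)$$
because $d_i-1\le\tfrac12 d_i(d_i-1)$ for every $i$. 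With these two repairs your argument goes through and yields a self-contained proof of what the paper simply imports from \cite{petro2}; as written, both pivotal estimates rest on reasoning that fails.
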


\begin{proof} If $G$ is elementary amenable, by Proposition 5.1 of \cite{petro2} and Proposition \ref{sc:09}, $hjh_{\mathbb F}(G)< b + n - k +\displaystyle { {1\over 2}{m(m-1)}}$. Again applying \ref{sc:09}, finishes the proof.
\end{proof}

\begin{remk}{\rm Note that the assumptions of Theorem \ref{sc:10} can occur quite naturally; for instance -- when $X$ is a proper $G$-CW-complex which is finitely dominated or homotopy equivalent to a closed manifold. In particular, we can deduce the following.}\end{remk}

\begin{thm}{\label{sc:11}}{\rm Suppose $G$ is an elementary amenable group acting freely and properly discontinuously on an  $n$-dimensional manifold $M$ homotopy equivalent to a closed  manifold $N$ of dimension $k$. Then $\underline{cd}(G)\leq n-k+1$.}
\end{thm}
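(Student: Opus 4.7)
The strategy is to apply Proposition \ref{sc:10} to the action of $G$ on $X=M$ and then pass from Bredon homological to cohomological dimension. First I would verify that the hypotheses of Proposition \ref{sc:10} are satisfied. Since $G$ acts freely on $M$, every cell stabilizer is trivial, so we may take the stabilizer bound $b=0$. The homotopy equivalence $M \simeq N$ with $N$ a closed $k$-manifold gives $\mathrm{H}_i(M,\mathbb F)\cong \mathrm{H}_i(N,\mathbb F)=0$ for every $i>k$ and every field $\mathbb F$ of characteristic zero, while $\mathrm{H}_k(M,\mathbb Z)\cong \mathrm{H}_k(N,\mathbb Z)$.

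If $N$ is orientable, Poincar\'e duality gives $\mathrm{H}_k(N,\mathbb Z)\cong \mathbb Z$, so the integral homology assumption of Proposition \ref{sc:10} holds with $m=1$ and $A=0$; the proposition then yields
$$\underline{hd}(G)\leq 0 + (n-k) + \tfrac{1}{2}\cdot 1\cdot 0 = n-k.$$
If $N$ is non-orientable, I would reduce to the orientable case by passing to the orientation double cover $\widetilde N\to N$ and the corresponding double cover $\widetilde M\to M$ classified by the composite $\pi_1(M)\cong \pi_1(N)\to \mathbb{Z}/2$. Let $G_0\leq G$ denote the (index at most two) subgroup consisting of those elements whose action on $M$ lifts to $\widetilde M$. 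Then $G_0$ acts freely and properly discontinuously on the $n$-manifold $\widetilde M\simeq \widetilde N$, which is now a closed orientable $k$-manifold, so the orientable case applied to $G_0$ gives $\underline{hd}(G_0)\leq n-k$. Because $[G:G_0]\leq 2$, Hirsch length is preserved, so Proposition \ref{sc:09} implies $\underline{hd}(G)=h(G)=h(G_0)=\underline{hd}(G_0)\leq n-k$.

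To finish, I would invoke the inequality $\underline{cd}(G)\leq \underline{hd}(G)+1$ for elementary amenable groups of finite Hirsch length (due to Flores and Nucinkis, \cite{FN}) to conclude
$$\underline{cd}(G)\leq \underline{hd}(G)+1 \leq (n-k)+1,$$
which is the desired bound.

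The main obstacle is the non-orientable case: one must check that the orientation double cover of $N$ induces, via the homotopy equivalence $M\simeq N$, a genuine double cover $\widetilde M\to M$ of manifolds to which a finite-index subgroup of $G$ lifts freely and properly. Once this compatibility is in place, the argument reduces to repeated application of Propositions \ref{sc:09} and \ref{sc:10} together with the standard comparison $\underline{cd}\leq \underline{hd}+1$.
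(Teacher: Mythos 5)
Your overall route is the same as the paper's: reduce to the case of an orientable target, apply Proposition \ref{sc:10} to $X=M$ with $b=0$ (free action), $m=1$, $A=0$ to get $\underline{hd}(G)\le n-k$, and finish with $\underline{cd}(G)\le \underline{hd}(G)+1$ (the paper quotes \cite[4.1]{nuc}, valid since a group acting freely and properly discontinuously on a manifold is countable; your appeal to finite Hirsch length serves the same purpose). The orientable case is fine. The gaps are in your non-orientable reduction, and there are two of them.

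First, the subgroup $G_0$ of elements that lift is the stabilizer of the classifying class $w=h^*w_1(TN)\in H^1(M;\mathbb{Z}/2)$ under the $G$-action, so $[G:G_0]$ equals the size of the orbit of $w$; this is finite (bounded by $|H^1(N;\mathbb{Z}/2)|$, since $M\simeq N$ with $N$ closed), but there is no reason it should be at most $2$ -- the cover you use is the pullback of the orientation cover of $N$, not the (characteristic) orientation cover of $M$, so it need not be preserved by every homeomorphism. Second, and more seriously, $G_0$ itself need not act on $\widetilde M$: each $g\in G_0$ has two lifts, and what acts freely and properly discontinuously on $\widetilde M$ is the group $\overline{G_0}$ of all lifts, an extension $1\to\mathbb{Z}/2\to\overline{G_0}\to G_0\to 1$ which need not split. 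Both defects are repairable with tools you already invoke: $\overline{G_0}$ is elementary amenable, the orientable case gives $\underline{hd}(\overline{G_0})\le n-k$, and since Hirsch length is unchanged by passing to finite-index subgroups and finite extensions, $h(G)=h(G_0)=h(\overline{G_0})$, so Proposition \ref{sc:09} yields $\underline{hd}(G)=h(G)\le n-k$. For comparison, the paper's proof works with the group $\overline G$ of all lifts of all of $G$ and quotes the invariance $\underline{cd}(G)=\underline{cd}(\overline G)$ from \cite[5.5]{nuc} (implicitly assuming every element of $G$ preserves $w$); your stabilizer-plus-Hirsch-length argument is a reasonable alternative, but as written it asserts two unjustified statements that must be corrected as above.
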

\begin{proof} We first argue that we can, without loss of generality, assume that the closed manifold $N$ is orientable. Indeed, suppose it is not and consider the oriented double covering $\overline{N}$ of $N$. Then, its pullback by the homotopy equivalence from $M$ to $N$ gives us a double covering $\overline{M}$ of $M$ and a homotopy equivalence from $\overline{M}$ to $\overline{N}$. Since, $G$ acts freely on $M$, we can deduce that there exists group $\overline{G}$ acting freely on $\overline{M}$ such that $\Z_2\unlhd \overline{G}$ and $\overline{G}/\Z_2\cong G$. But, in this case, $\underline{cd}(G)=\underline{cd}(\overline{G})$ (see \cite[5.5]{nuc}). So, we can assume that $N$ is orientable.

Now, since $\mathrm{H}_k(M, \Z)= \Z$ and $\mathrm{H}_i(M, \mathbb F)=0$, where $\mathbb F$ is a field of characteristic 0, we have that $\underline{hd}(G)\leq n - k$. Hence, $\underline{cd}(G)\leq n - k+1$, because $G$ is countable (see \cite[4.1]{nuc}).
%
\end{proof}




\end{document}